\theoremstyle{plain}
\newtheorem{proposition}{Proposition}[section]
\theoremstyle{plain}
\newtheorem{theorem}{Theorem}[section]
\numberwithin{equation}{section}	
\newtheorem{theoremA}{Theorem}
\newtheorem{propositionE}[theoremA]{Proposition}
\theoremstyle{plain}
\theoremstyle{plain}
\newtheorem{corollary}{Corollary}[theorem]
\theoremstyle{definition}
\newtheorem{remark}{Remark}[section]
\DeclarePairedDelimiter{\abs}{\lvert}{\rvert}		
\DeclareMathOperator{\tr}{tr}				
\DeclareMathOperator{\Id}{Id_\textit{n}}	
\newcommand{\numberset}{\mathbb}
\newcommand{\N}{\numberset{N}}			
\newcommand{\R}{\numberset{R}}			
\newcommand{\past}{p^\ast}				
\newcommand{\loc}{{\rm loc}}			
\newcommand{\stressu}{{a\!\left(\nabla u\right)}}	
\newcommand{\stressv}{{a\!\left(\nabla v\right)}}	
\def\Xint#1{\mathchoice
	{\XXint\displaystyle\textstyle{#1}}%
	{\XXint\textstyle\scriptstyle{#1}}%
	{\XXint\scriptstyle\scriptscriptstyle{#1}}%
	{\XXint\scriptscriptstyle\scriptscriptstyle{#1}}%
	\!\int}
\def\XXint#1#2#3{{\setbox0=\hbox{$#1{#2#3}{\int}$ }
		\vcenter{\hbox{$#2#3$ }}\kern-.6\wd0}}
\def\dashint{\Xint-}
\begin{document}
	
	\title[Positive solutions to the critical~$p$-Laplace equation]{Classification results for bounded positive solutions to the critical~$p$-Laplace equation}

	\author[Giulio Ciraolo]{Giulio Ciraolo \orcidlink{0000-0002-9308-0147}}
	\address[]{Giulio Ciraolo. Dipartimento di Matematica ‘Federigo Enriques’, Università degli Studi di Milano, Via Cesare Saldini 50, 20133, Milan, Italy}
	\email{giulio.ciraolo@unimi.it}
	
	\author[Michele Gatti]{Michele Gatti \orcidlink{0009-0002-6686-9684}}
	\address[]{Michele Gatti. Dipartimento di Matematica ‘Federigo Enriques’, Università degli Studi di Milano, Via Cesare Saldini 50, 20133, Milan, Italy}
	\email{michele.gatti1@unimi.it}
	
	\subjclass[2020]{Primary 35B33, 35J92; Secondary 35B09}
	\date{\today}
	\dedicatory{}
	\keywords{Classification results, critical $p$-Laplace equation, integral estimates, quasilinear elliptic equations}
	
	\begin{abstract}
		By providing optimal or nearly optimal integral estimates, we show that every positive, bounded or moderately growing, local weak solution to the critical~$p$-Laplace equation in~$\R^n$, with~$n\geq 3$, and whose infimum over a ball behaves properly must be a bubble.
	\end{abstract}
	
	\maketitle
	
	
	\section{Introduction}
	\label{sec:intro-class-bounded}
	
	We investigate the classification of positive solutions to the critical~$p$-Laplace equation
	\begin{equation}
		\label{eq:critica-plap}
		\Delta_p u + u^{\past-1} = 0 \quad\text{in } \R^n,
	\end{equation}
	where~$1<p<n$ and~$n \geq 3$. This is a classical topic in PDEs which arises from the characterization of critical points of the Sobolev inequality as well as from the Yamabe problem when~$p=2$.
	
	When~$p=2$, classical non-negative solutions to~\eqref{eq:critica-plap} were classified in the seminal works of Gidas, Ni \& Nirenberg~\cite{gnn}, Caffarelli, Gidas \& Spruck~\cite{cgs}, as well as Chen \& Li~\cite{cl}. This classification was later extended to the full range~$1<p<n$ for weak solutions in the energy space
	\begin{equation*}
		\mathcal{D}^{1,p}(\R^n) \coloneqq \{ u \in L^{\past}\!(\R^n) \mid \nabla u \in L^p(\R^n) \}
	\end{equation*}
	by Damascelli, Merch\'an, Montoro \& Sciunzi~\cite{dm}, V\'etois~\cite{vet}, and Sciunzi~\cite{sciu}, who proved that positive energy solutions to~\eqref{eq:critica-plap}, referred to as~$p$-\textit{bubbles}, are explicitly given by
	\begin{equation}
		\label{eq:pbubb-classif}
		U_p[z,\lambda] (x) \coloneqq \left( \frac{\lambda^\frac{1}{p-1} \, n^\frac{1}{p} \left(\frac{n-p}{p-1}\right)^{\!\!\frac{p-1}{p}}}{\lambda^\frac{p}{p-1}+\abs*{x-z}^\frac{p}{p-1}} \right)^{\!\!\frac{n-p}{p}} \!,
	\end{equation}
	where~$\lambda>0$ and~$z \in \R^n$ are the scaling and translation parameters, respectively. We also mention that, still in the case of energy solutions, Ciraolo, Figalli \& Roncoroni~\cite{cfr} addressed classification results in the anisotropic setting in convex cones of~$\R^n$. 
	
	The classification of solutions not assumed a priori to belong to the space~$\mathcal{D}^{1,p}(\R^n)$ is known only for~$p=2$, as established in~\cite{cgs,cl}. Moreover, a remarkable result due to Schoen -- see~\cite[Corollary~1.6]{li-zhang} -- implies that a positive solution~$u \in C^2(\R^n)$ to~\eqref{eq:critica-plap} for~$p=2$ enjoys the estimate
	\begin{equation*}
		\int_{B_R} \left(\abs*{\nabla u}^2 + u^{2^\ast}\right) dx \leq C_n,
	\end{equation*}
	for some dimensional constant~$C_n>0$, where~$B_R \coloneqq B_R(0)$. As a consequence, any positive solution~$u \in C^2(\R^n)$ to~\eqref{eq:critica-plap} immediately belongs to the energy space~$\mathcal{D}^{1,2}(\R^n)$.
	
	Therefore, the classification result for any~$1<p<n$ without the energy assumption remains an open problem. Specifically, one seeks to classify positive \textit{local weak solutions} to~\eqref{eq:critica-plap}, that is positive functions~$u \in W^{1,p}_{\loc}(\R^n) \cap L^\infty_{\loc}(\R^n)$ such that
	\begin{equation*}
		\int_{\R^n} \left\langle \abs*{\nabla u}^{p-2} \,\nabla u, \nabla \psi \right\rangle dx = 	\int_{\R^n} u^{\past-1} \psi \, dx \quad\text{for every } \psi \in C^\infty_c(\R^n).
	\end{equation*}
	Some progresses have been made in the last years. The first result in this direction was obtained by Catino, Monticelli \& Roncoroni~\cite{catino}. Their work was significantly improved by Ou~\cite{ou}, and later refined by V\'etois~\cite{vet-plap} and Sun \& Wang~\cite{sun-wang}. All these results can be summarized in the following theorem.
		
	\begin{theoremA}
	\label{th:class-noener}
		Let~$n \in \N$,~$1<p<n$, and let~$u \in W^{1,p}_{\loc}(\R^n) \cap L^\infty_{\loc}(\R^n)$ be a positive local weak solution to~\eqref{eq:critica-plap}. Moreover, assume that~$p_n < p < n$, where
		\begin{equation*}
			p_n \coloneqq
			\begin{cases}
				\begin{aligned}
					& \quad 1			&& \text{if } n=2, \\
					& \frac{n^2}{3n-2}	&& \text{if } n=3,4, \\
					& \frac{n^2+2}{3n}	&& \text{if } n \geq 5.
				\end{aligned}
			\end{cases}
		\end{equation*}
		Then,~$u$ is a~$p$-bubble of the form~\eqref{eq:pbubb-classif}.
	\end{theoremA}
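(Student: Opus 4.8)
The plan is to reduce the statement to the already–known classification of energy solutions; the whole substance of the proof will be a family of integral estimates showing that $u$ actually lies in $\mathcal D^{1,p}(\R^n)$, and the hypothesis $p>p_n$ will be consumed there.

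\textbf{Step 1: reduction to the energy class.} Since $u\in W^{1,p}_{\loc}(\R^n)\cap L^\infty_{\loc}(\R^n)$, the source term $u^{\past-1}$ is locally bounded, so the regularity theory for quasilinear elliptic equations gives $u\in C^{1,\alpha}_{\loc}(\R^n)$, while the strong maximum principle gives $u>0$ on $\R^n$. By the classification of positive solutions in the energy space --- every $u\in\mathcal D^{1,p}(\R^n)$ solving~\eqref{eq:critica-plap} is a $p$-bubble, see~\cite{dm,vet,sciu} --- it then suffices to prove
\[
\int_{\R^n}u^{\past}\,dx<\infty\qquad\text{and}\qquad\int_{\R^n}\abs*{\nabla u}^p\,dx<\infty .
\]

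\textbf{Step 2: weighted Caccioppoli inequalities.} Fix $R\ge1$ and a cutoff $\eta\in C^\infty_c(B_{2R})$ with $\eta\equiv1$ on $B_R$, $0\le\eta\le1$, $\abs*{\nabla\eta}\le C/R$. For $\beta\ge1$ I would test the weak formulation with $\psi=u^{\beta}\eta^p$ --- legitimate after truncating $u$ at height $k$ and letting $k\to\infty$, using $u\in C^1_{\loc}\cap L^\infty_{\loc}$ --- integrate by parts, and absorb the mixed gradient term by Young's inequality, obtaining
\[
\int\abs*{\nabla u}^p u^{\beta-1}\eta^p\,dx\le C(\beta)\!\left[\int u^{\past-1+\beta}\eta^p\,dx+R^{-p}\!\int_{B_{2R}}\!u^{\beta-1+p}\,dx\right].
\]
Writing $w\coloneqq u^{(\beta-1+p)/p}$, so that $\abs*{\nabla w}^p\simeq u^{\beta-1}\abs*{\nabla u}^p$, and feeding this into the Sobolev inequality for $w\eta$ yields a recursion controlling $\int_{B_R}u^{(\beta-1+p)\past/p}$ by a power of $R$ times integrals of $u$ over $B_{2R}$. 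A complementary family of estimates, obtained by testing with negative powers $\psi=(u+\varepsilon)^{-\gamma}\eta^p$ for small $\gamma>0$, controls $\abs*{\nabla\log u}^p$-type quantities and supports a Pohozaev/Rellich identity on large balls that pins down the behaviour of $u$ at infinity.

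\textbf{Step 3: the iteration, and where $p>p_n$ enters.} The delicate point --- and, I expect, the main obstacle --- is that at the critical growth the exponent $\past-1+\beta$ is too large for the recursion of Step~2 to close on its own. One must first turn local integrability into a global one, proving $u\in L^{q_0}(\R^n)$ for a suitable $q_0$ by running a Moser iteration on each dyadic annulus $B_{2^{k+1}}\setminus B_{2^{k}}$ with the scale-invariant constants of the critical equation and summing the resulting geometric series; the gained integrability, fed back into the Caccioppoli--Sobolev recursion, then produces bounds $\int_{B_R}u^{\past}\,dx\le CR^{\theta_j}$ with $\theta_j\downarrow0$, whence $u\in L^{\past}(\R^n)$. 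Keeping every intermediate exponent $\beta$ in the window where the Caccioppoli absorption, the Sobolev step and the Hölder interpolation are simultaneously available --- while the geometric series over annuli still converges --- is exactly what imposes $p>p_n$, the three cases in the definition of $p_n$ reflecting the different optimal choices in dimensions $n=3,4$ and $n\ge5$. All the quantitative content, and the sharpness of the threshold, lives in this bookkeeping.

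\textbf{Step 4: conclusion.} Once $u\in L^{\past}(\R^n)$, I take $\beta=1$ in the estimate of Step~2: its right-hand side is then bounded uniformly in $R$, since $\int u^{\past}\eta^p\le\int_{\R^n}u^{\past}$ and, by Hölder, $R^{-p}\int_{B_{2R}}u^{p}\le C\bigl(\int_{\R^n}u^{\past}\bigr)^{p/\past}$. Letting $R\to\infty$ gives $\int_{\R^n}\abs*{\nabla u}^p\,dx<\infty$, so $u\in\mathcal D^{1,p}(\R^n)$, and the energy classification recalled in Step~1 yields $u=U_p[z,\lambda]$ for some $\lambda>0$ and $z\in\R^n$.
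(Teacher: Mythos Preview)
Your Step~3 is not a proof but a narrative, and the narrative is wrong in a specific, fatal way. The Moser--Caccioppoli recursion you set up in Step~2 does \emph{not} close at the critical exponent: testing with $u^{\beta}\eta^p$ and applying Sobolev to $w=u^{(\beta-1+p)/p}$ controls $\int u^{(\beta-1+p)\past/p}$ by $\int u^{\past-1+\beta}$, and a one-line computation shows $(\beta-1+p)\past/p - (\past-1+\beta) = p(\beta-1)/(n-p)$. So at $\beta=1$ the recursion is tautological, for $\beta<1$ it goes the wrong way, and for $\beta>1$ it only helps once you already know $u\in L^{\past-1+\beta}$ for some $\beta>1$ --- which is strictly more than what you are trying to prove. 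This is precisely the criticality obstruction; no amount of dyadic-annulus bookkeeping or choice of intermediate exponents removes it, and the threshold $p>p_n$ has nothing to do with making such an iteration converge.

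The actual argument --- Theorem~\ref{th:class-noener} is quoted from~\cite{catino,ou,vet-plap,sun-wang}, not proved in this paper --- does not pass through $u\in\mathcal D^{1,p}(\R^n)$ at all. It uses the $P$-function $P=n\frac{p-1}{p}v^{-1}\abs{\nabla v}^p+\bigl(\frac{p}{n-p}\bigr)^{p-1}v^{-1}$ for $v=u^{-p/(n-p)}$: one derives a differential inequality showing that the traceless part $\mathring W$ of $\nabla a(\nabla u)$ satisfies an integral estimate whose right-hand side vanishes as $R\to\infty$ provided one can prove $\int_{B_R} v^{1-n}P^{-\frac{p-1}{p}+\varepsilon}\abs{\nabla v}^{2(p-1)}\,dx=o(R^2)$. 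The restriction $p>p_n$ is exactly what is needed to establish this last growth bound from the available a~priori estimates (the lower bound~\eqref{eq:bbu-serzou} and V\'etois-type integral estimates); the different formulas for $p_n$ in low versus high dimensions reflect different optimizations in that step. Once $\mathring W\equiv 0$, one concludes directly that $u$ is a bubble, without ever showing finite energy beforehand. Your route, by contrast, would --- if it worked --- prove $u\in\mathcal D^{1,p}(\R^n)$ under $p>p_n$ alone, a statement not known and strictly stronger than what the cited papers obtain.
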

	
	For~$p = 2$, since any solution~$u \in W^{1,2}_{\loc}(\R^n) \cap L^\infty_{\loc}(\R^n)$ to~\eqref{eq:critica-plap} is actually smooth by standard regularity theory, the conclusion of Theorem~\ref{th:class-noener} follows directly from the discussion above in every dimension. 

	In the general case~$1<p<n$, apart from Theorem~\ref{th:class-noener}, the classification of local weak solutions to~\eqref{eq:critica-plap} is known only under additional global decay or boundedness assumptions. Motivated by this limitation, we establish the following higher integrability result under the assumption that the infimum of the solution behaves properly at infinity, namely that there exists some constant~$C>0$ such that
		\begin{equation}
			\label{eq:ass-inf}
			\inf_{B_R} u \leq C R^{-{\frac{n-p}{p-1}}} \quad\text{for every } R \geq 1.
		\end{equation}
		In particular, assumption~\eqref{eq:ass-inf} is required at only one point in the proof.
	
	\begin{theorem}
		\label{th:u-Lpast-1}
		Let~$n \in \N$,~$1<p<n$, and let~$u \in W^{1,p}_{\loc}(\R^n) \cap L^\infty_{\loc}(\R^n)$ be a non-negative, non-trivial, local weak solution to~\eqref{eq:critica-plap} satisfying~\eqref{eq:ass-inf}. Then,~$u \in L^{\past-1}(\R^n)$.
	\end{theorem}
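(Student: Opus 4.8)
\medskip
\noindent\textit{Proof strategy.} The plan is to rephrase the statement as the finiteness of a Radon measure and then establish it through a Caccioppoli-type integral estimate, invoking the theory of $p$-superharmonic functions only to produce an a priori growth rate. Since $u\ge 0$, $u\not\equiv 0$ and $\Delta_p u=-u^{\past-1}\le 0$, the function $u$ is a non-negative $p$-superharmonic function on $\R^n$; by the strong minimum principle $u>0$ everywhere. As $u\in L^\infty_{\loc}(\R^n)$, we have $u^{\past-1}\in L^1_{\loc}(\R^n)$, so the Riesz measure of $u$ is exactly $\mu:=-\Delta_p u= u^{\past-1}\,dx$, and proving $u\in L^{\past-1}(\R^n)$ amounts to showing $\mu(\R^n)<\infty$. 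Applying the lower Kilpeläinen--Malý pointwise bound at a fixed $x_0$ and letting the radius go to $\infty$, the finiteness of $u(x_0)$ forces the Wolff integral $\int_0^\infty\big(\mu(B_r(x_0))/r^{n-p}\big)^{1/(p-1)}\,dr/r$ to converge; by monotonicity this gives $\mu(B_R)=o(R^{n-p})$, hence (via Hölder) at most polynomial growth of each $\Phi_\gamma(R):=\int_{B_R}u^{\past-1-\gamma}\,dx$, and also $\inf_{\R^n}u=0$ (a positive lower bound would force $\mu(B_R)\gtrsim R^n$).

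The core of the argument is the following integral estimate. For $0<\gamma<p-1$ and any cutoff $\eta$ with $0\le\eta\le 1$, testing the equation with $\psi=u^{-\gamma}\eta^p$ — admissible since $u>0$ is continuous and $u\in W^{1,p}_{\loc}$, so $\psi\in W^{1,p}$ has compact support — and absorbing the cross term by Young's inequality yields
\[
\frac{\gamma}{2}\int_{\R^n}\abs{\nabla u}^p u^{-\gamma-1}\eta^p\,dx+\int_{\R^n}u^{\past-1-\gamma}\eta^p\,dx\le C(n,p,\gamma)\int_{\R^n}u^{p-1-\gamma}\abs{\nabla\eta}^p\,dx .
\]
Choosing $\eta=\eta_R$ with $\eta_R\equiv 1$ on $B_R$, $\operatorname{supp}\eta_R\subset B_{2R}$ and $\abs{\nabla\eta_R}\le 2/R$, and interpolating $u^{p-1-\gamma}$ on the annulus $B_{2R}\setminus B_R$ between $1$ and $u^{\past-1-\gamma}$ by Hölder, one obtains a recursion of reverse-Hölder type,
\[
\Phi_\gamma(R)\le C(n,p,\gamma)\,R^{\,n(1-\theta)-p}\,\Phi_\gamma(2R)^\theta,\qquad \theta:=\tfrac{p-1-\gamma}{\past-1-\gamma}\in(0,1),
\]
with $n(1-\theta)-p>0$. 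Because $\Phi_\gamma$ is polynomially bounded (so $\Phi_\gamma(2^kR)^{\theta^k}\to 1$), iterating this recursion is legitimate and lowers the admissible growth exponent of $\Phi_\gamma$.

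The difficulty is that this estimate is \emph{scale-critical}: its homogeneity coincides with that of~\eqref{eq:critica-plap}, so the recursion alone merely transports a polynomial growth rate and stalls before reaching a bound independent of $R$. To close it one must inject the strictly sub-polynomial information of the first step — the convergence of the Wolff integral, equivalently the summability of $\big(\mu(B_{2^k})\,2^{-k(n-p)}\big)^{1/(p-1)}$ — by iterating the integral inequality along a carefully chosen sequence of scales and cutoffs, possibly in combination with the companion Kilpeläinen--Malý upper bound $u(x)\lesssim\int_0^\infty(\mu(B_r(x))/r^{n-p})^{1/(p-1)}\,dr/r$ (valid here since $\inf_{\R^n}u=0$), which pins down $u$ pointwise by the same potential. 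Once the iteration is made to converge one gets $\sup_R\Phi_\gamma(R)<\infty$, and letting $\gamma\to 0^+$ gives $u\in L^{\past-1}(\R^n)$. This closure of the critical recursion is the main obstacle; the remainder is routine Caccioppoli/Sobolev bookkeeping together with the regularity theory of $p$-superharmonic functions.
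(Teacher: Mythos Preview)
Your proposal is honest about its own gap, and that gap is real: the recursion
\[
\Phi_\gamma(R)\le C\,R^{\,n(1-\theta)-p}\,\Phi_\gamma(2R)^\theta,\qquad \theta=\frac{p-1-\gamma}{\past-1-\gamma},
\]
has fixed-point exponent $\alpha=\dfrac{n(1-\theta)-p}{1-\theta}=\dfrac{n-p}{p}(1+\gamma)>0$, and iteration from any polynomial (or even $o$-polynomial) starting bound converges to $R^{\alpha}$, not to a constant. The Wolff-potential information $\mu(B_R)=o(R^{n-p})$ enters the iteration only through a factor raised to the power $\theta^k\to 0$, so it gets washed out; neither the lower nor the upper Kilpel\"ainen--Mal\'y bound supplies the missing decay. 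In short, the ``closure of the critical recursion'' that you flag as the main obstacle is not achievable with the ingredients you list, and the argument as written does not prove the theorem.

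The paper avoids this trap by a different mechanism. First, it obtains \emph{sharp} subcritical estimates $\int_{B_R}u^\gamma\le C R^{\,n-\frac{n-p}{p-1}\gamma}$ for all $\gamma<p_\ast-1$ (Proposition~2.1), using the weak Harnack inequality together with the Shakerian--V\'etois result that $R^{(n-p)/(p-1)}\min_{\partial B_R}u$ stays bounded as $R\to\infty$; this is strictly stronger than what your Wolff-potential step yields. Second, at the critical exponent it exploits an \emph{algebraic cancellation}: writing $v=u^{-p/(n-p)}$, one has $\Delta_p v=P$ with $P= p_\sharp\,v^{-1}\abs{\nabla v}^p+c\,v^{-1}$, and testing with $v^{-p_\sharp}\eta$ makes the $\abs{\nabla v}^p$ terms cancel exactly (since the coefficient $p_\sharp$ matches), leaving
\[
\int_{\R^n}\eta\,v^{-p_\sharp-1}\,dx\;\le\;\frac{C}{R}\int_{A_{R,2R}}v^{-p_\sharp}\abs{\nabla v}^{p-1}\,dx.
\]
A single H\"older split of the right-hand side into a near-critical gradient term and a strictly subcritical zero-order term, each controlled by the first step, produces a bound independent of $R$. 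There is no recursion to close; the criticality is handled by the cancellation, not by iteration.
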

	
	As a consequence, we provide a complete classification under the sole assumption that the solution is bounded.

	\begin{theorem}
	\label{th:class-bounded}
		Let~$n \in \N$,~$1<p<n$, and let~$u \in W^{1,p}_{\loc}(\R^n) \cap L^\infty_{\loc}(\R^n)$ be a non-negative, non-trivial, local weak solution to~\eqref{eq:critica-plap} satisfying~\eqref{eq:ass-inf}. Suppose, in addition, that~$u$ is globally bounded, i.e.~$u \in L^\infty(\R^n)$. Then,~$u$ is a~$p$-bubble of the form~\eqref{eq:pbubb-classif}.
	\end{theorem}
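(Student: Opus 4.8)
The plan is to deduce Theorem~\ref{th:class-bounded} from Theorem~\ref{th:u-Lpast-1} together with the already-available classification in Theorem~\ref{th:class-noener}, or rather by directly upgrading the integrability to membership in the energy space~$\mathcal{D}^{1,p}(\R^n)$ and then invoking the known classification of $p$-bubbles among energy solutions (\cite{dm,vet,sciu}). Concretely, let $u \in W^{1,p}_{\loc}(\R^n)\cap L^\infty(\R^n)$ be a non-negative, non-trivial, local weak solution. By Theorem~\ref{th:u-Lpast-1} we know $u \in L^{\past-1}(\R^n)$. The first step is to promote this to $u \in L^{\past}(\R^n)$: since $u$ is globally bounded, interpolating between $L^{\past-1}(\R^n)$ and $L^\infty(\R^n)$ immediately gives $u \in L^q(\R^n)$ for every $q \geq \past-1$, and in particular $u \in L^{\past}(\R^n)$. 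This already shows $u^{\past-1} \in L^{(\past)'}(\R^n)$, so the right-hand side of the equation is in a good dual space.

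The second, and main, step is to show $\nabla u \in L^p(\R^n)$. The plan is the standard Caccioppoli/energy-cutoff argument: test the equation with $\psi = u\,\eta_R^p$, where $\eta_R$ is a cutoff that is $1$ on $B_R$, supported in $B_{2R}$, with $\abs{\nabla \eta_R}\leq C/R$. This yields
\begin{equation*}
	\int_{\R^n} \abs*{\nabla u}^p \eta_R^p\,dx \leq C\int_{\R^n} u^{\past-1} u\, \eta_R^p\,dx + C\int_{\R^n} \abs*{\nabla u}^{p-1} u\, \abs*{\nabla\eta_R}\,\eta_R^{p-1}\,dx.
\end{equation*}
The first term on the right is bounded by $\int_{\R^n} u^{\past}\,dx < \infty$ uniformly in $R$, using Step~1. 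The second term is handled by Young's inequality, $\abs{\nabla u}^{p-1} u\,\abs{\nabla\eta_R}\,\eta_R^{p-1} \leq \varepsilon\,\abs{\nabla u}^p\eta_R^p + C_\varepsilon\, u^p\abs{\nabla\eta_R}^p$, and absorbing the first piece into the left-hand side; the remaining term is $C_\varepsilon R^{-p}\int_{B_{2R}\setminus B_R} u^p\,dx$, which tends to $0$ as $R\to\infty$ because $u \in L^p(\R^n)$ (again by interpolation, since $p \geq \past - 1$ is false in general — so here one must instead note $\int_{B_{2R}} u^p \le \|u\|_\infty^{p-(\past-1)}\int_{\R^n} u^{\past-1}$, which is finite, and then a more careful dyadic argument or the fact that the tail $\int_{B_{2R}\setminus B_R} u^p \to 0$ gives the vanishing). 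Letting $R\to\infty$ and using Fatou's lemma yields $\int_{\R^n}\abs{\nabla u}^p\,dx \leq C < \infty$, hence $\nabla u \in L^p(\R^n)$.

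Combining the two steps, $u \in L^{\past}(\R^n)$ and $\nabla u \in L^p(\R^n)$, i.e.\ $u \in \mathcal{D}^{1,p}(\R^n)$. Since $u$ is a non-negative, non-trivial local weak solution lying in the energy space, the classification of energy solutions (\cite{dm,vet,sciu}) applies and forces $u$ to be a $p$-bubble of the form~\eqref{eq:pbubb-classif}, which is the desired conclusion.

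I expect the main obstacle to be making Step~2 fully rigorous in the borderline exponent bookkeeping: one needs the integrand $u^p\abs{\nabla\eta_R}^p$ to produce a vanishing (not merely bounded) tail, which requires knowing $u \in L^p(\R^n)$ rather than just $u^p \in L^1_{\loc}$; this follows from $u\in L^{\past-1}(\R^n)\cap L^\infty(\R^n)$ together with the elementary inequality $\past - 1 \le p$ precisely when $p \ge 2$ — for $1<p<2$ one has $\past-1 > p$ is also false, so in fact $\past - 1 = \frac{n(p-1)}{n-p} \ge p \iff n(p-1)\ge p(n-p)\iff p^2 \ge 2p \iff p\ge 2$, meaning for $p<2$ one has $\past-1<p$ and interpolation between $L^{\past-1}$ and $L^\infty$ does \emph{not} directly give $L^p$. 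In that regime one should instead test with $\psi = u^{\past - p + 1}\eta_R^p$ or a truncated power of $u$, or first bootstrap integrability of $u$ downward using the equation and the already-established $u\in L^{\past}(\R^n)$ via a De Giorgi–type or Moser-type iteration exploiting global boundedness; alternatively, choosing the test function $u^{s}\eta_R^p$ with a suitable $s\in(0,1]$ and optimizing over $s$ circumvents the issue entirely. Once the correct test function is selected, the absorption and limiting arguments are routine.
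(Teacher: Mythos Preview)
Your overall strategy is exactly the paper's first proof: interpolate $u\in L^{\past-1}(\R^n)\cap L^\infty(\R^n)$ to obtain $u\in L^{\past}(\R^n)$, test with $u\,\eta_R^p$, absorb via Young, and cite the energy classification~\cite{dm,vet,sciu}. The only substantive issue is the tail term $R^{-p}\!\int_{A_{R,2R}} u^p\,dx$, which you correctly flag as the obstacle but then over-engineer. The paper resolves it in one line by H\"older with exponent $\past/p>1$:
\begin{equation*}
\frac{1}{R^{p}}\int_{A_{R,2R}} u^{p}\,dx \;\leq\; \frac{\abs*{B_{2R}}^{1-\frac{p}{\past}}}{R^{p}}\left(\int_{A_{R,2R}} u^{\past}\,dx\right)^{\!\frac{p}{\past}}\;=\;C\left(\int_{A_{R,2R}} u^{\past}\,dx\right)^{\!\frac{p}{\past}}\;\leq\;C,
\end{equation*}
since $\abs{B_{2R}}^{1-p/\past}=c_n\,R^{n\cdot p/n}=c_n\,R^{p}$ cancels the prefactor exactly. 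No case split on $p$, no auxiliary $L^p$-membership, no alternative test function is needed: you have already secured $u\in L^{\past}(\R^n)$, and that is precisely the exponent that makes the scaling close.

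Two minor corrections to your discussion of the obstacle: first, your formula $\past-1=\tfrac{n(p-1)}{n-p}$ is actually $p_\ast-1$; the correct value is $\past-1=\tfrac{np-n+p}{n-p}$, so the inequality $\past-1\le p$ is equivalent to $n\ge p(p+1)$, not to $p\ge2$. Second, you do not need the tail to vanish---a uniform bound on the right-hand side already gives $\int_{B_R}\abs{\nabla u}^p\le C$ for all $R$, and monotone convergence finishes. With the H\"older step above inserted, your proof is complete and coincides with the paper's.
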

 	
	Furthermore, Theorem~\ref{th:u-Lpast-1} motivates the following corollary.
	
	\begin{corollary}
		\label{cor:absolut-cont}
		Let~$n \in \N$,~$1<p<n$, and let~$u \in W^{1,p}_{\loc}(\R^n) \cap L^\infty_{\loc}(\R^n)$ be a non-negative, non-trivial, local weak solution to~\eqref{eq:critica-plap} satisfying~\eqref{eq:ass-inf}. Suppose, in addition, that~$u^{\past-1}$ is uniformly continuous. Then,~$u$ is a~$p$-bubble of the form~\eqref{eq:pbubb-classif}.
	\end{corollary}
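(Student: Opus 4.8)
The plan is to deduce the corollary from Theorem~\ref{th:class-bounded}: it suffices to upgrade the uniform continuity of~$u^{\past-1}$, combined with the global integrability from Theorem~\ref{th:u-Lpast-1}, to the global bound~$u \in L^\infty(\R^n)$, after which the classification for bounded solutions applies verbatim.

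First I would invoke Theorem~\ref{th:u-Lpast-1} to obtain~$u \in L^{\past-1}(\R^n)$, i.e.~$f \coloneqq u^{\past-1} \in L^1(\R^n)$. Since~$f \geq 0$ is moreover uniformly continuous on~$\R^n$, I claim that~$f(x) \to 0$ as~$\abs{x} \to \infty$. Indeed, if this failed there would be~$\varepsilon>0$ and points~$x_k$ with~$\abs{x_k} \to \infty$ and~$f(x_k) \geq \varepsilon$; by uniform continuity there is~$\delta>0$, independent of~$k$, such that~$f \geq \varepsilon/2$ on every ball~$B_\delta(x_k)$. Extracting a subsequence with~$\abs{x_{k_{j+1}}} > \abs{x_{k_j}} + 2\delta$ makes the balls~$B_\delta(x_{k_j})$ pairwise disjoint, so~$\int_{\R^n} f \, dx = \infty$, contradicting~$f \in L^1(\R^n)$. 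Hence~$u^{\past-1}(x) \to 0$, and therefore also~$u(x) \to 0$, as~$\abs{x} \to \infty$.

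In particular there is~$R>0$ with~$u \leq 1$ on~$\R^n \setminus B_R$, while~$u \in L^\infty_{\loc}(\R^n)$ gives a bound on the compact set~$\overline{B_R}$; together these yield~$u \in L^\infty(\R^n)$. Since~$u$ is non-negative and non-trivial by hypothesis, Theorem~\ref{th:class-bounded} then applies directly and shows that~$u$ is a~$p$-bubble of the form~\eqref{eq:pbubb-classif}. I do not anticipate a genuine obstacle here: the only analytic input beyond Theorems~\ref{th:u-Lpast-1} and~\ref{th:class-bounded} is the elementary fact that a non-negative, uniformly continuous, integrable function on~$\R^n$ decays to zero at infinity, and the rest is bookkeeping.
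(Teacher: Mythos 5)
Your argument is exactly the paper's: apply Theorem~\ref{th:u-Lpast-1} to get~$u^{\past-1}\in L^1(\R^n)$, combine with uniform continuity to deduce decay at infinity (you spell out the standard disjoint-balls contradiction, which the paper leaves implicit), conclude~$u\in L^\infty(\R^n)$, and invoke Theorem~\ref{th:class-bounded}. No gap; same route.
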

	
	Actually, Theorem~\ref{th:class-bounded} is a particular case of the following more general statement.
	
	\begin{theorem}
		\label{th:class-Lq}
		Let~$n \in \N$,~$1<p<n$, and let~$u \in W^{1,p}_{\loc}(\R^n) \cap L^\infty_{\loc}(\R^n)$ be a non-negative, non-trivial, local weak solution to~\eqref{eq:critica-plap} satisfying~\eqref{eq:ass-inf}. Suppose, in addition, that~$u \in L^q(\R^n)$ for some~$q \in [\past,+\infty]$. Then,~$u$ is a~$p$-bubble of the form~\eqref{eq:pbubb-classif}.
	\end{theorem}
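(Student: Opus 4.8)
The plan is to reduce Theorem~\ref{th:class-Lq} to the already-established classification of positive \emph{energy} solutions: I would show that the extra hypothesis $u \in L^q(\R^n)$ with $q \in [\past,+\infty]$ forces $u \in \mathcal{D}^{1,p}(\R^n)$, and then invoke the results of Damascelli, Merch\'an, Montoro \& Sciunzi~\cite{dm}, V\'etois~\cite{vet}, and Sciunzi~\cite{sciu} to conclude that $u = U_p[z,\lambda]$ for suitable $\lambda>0$, $z\in\R^n$.

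The first step is to upgrade the integrability to $u \in L^{\past}(\R^n)$. Theorem~\ref{th:u-Lpast-1} already provides $u \in L^{\past-1}(\R^n)$ with no extra assumption; since $\past-1 < \past \le q$, an elementary interpolation,
\[
\norma{u}_{L^{\past}(\R^n)} \le \norma{u}_{L^{\past-1}(\R^n)}^{\theta}\,\norma{u}_{L^{q}(\R^n)}^{1-\theta},
\]
with $\theta\in(0,1]$ solving $\tfrac{1}{\past}=\tfrac{\theta}{\past-1}+\tfrac{1-\theta}{q}$ (and $\tfrac{1}{q}:=0$ when $q=\infty$), yields $u \in L^{\past}(\R^n)$.

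The second step is the gradient bound $\nabla u \in L^p(\R^n)$, obtained via a Caccioppoli estimate. For $R>0$ I would take $\eta \in C^\infty_c(B_{2R})$ with $0\le\eta\le1$, $\eta\equiv1$ on $B_R$, $\abs{\nabla\eta}\le C/R$; testing~\eqref{eq:critica-plap} with $\psi = u\eta^p$ (admissible since $u \in W^{1,p}_{\loc}(\R^n)\cap L^\infty_{\loc}(\R^n)$) and absorbing the cross term via Young's inequality gives
\[
\int_{B_R} \abs{\nabla u}^p \, dx \le C\int_{B_{2R}} u^{\past}\,dx + \frac{C}{R^p}\int_{B_{2R}} u^p\,dx .
\]
The first term is at most $C\,\norma{u}_{L^{\past}(\R^n)}^{\past}$. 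For the second, H\"older's inequality with exponents $\past/p$ and its conjugate, together with $1-p/\past = p/n$ and $\abs{B_{2R}}\sim R^n$, gives $R^{-p}\int_{B_{2R}} u^p\,dx \le C\,\norma{u}_{L^{\past}(\R^n)}^{p}$. Hence $\int_{B_R}\abs{\nabla u}^p\,dx$ is bounded independently of $R$, and letting $R\to\infty$ (monotone convergence) yields $\nabla u \in L^p(\R^n)$. Combining the two steps, $u \in \mathcal{D}^{1,p}(\R^n)$, and the cited classification of positive energy solutions completes the proof; Theorem~\ref{th:class-bounded} and Corollary~\ref{cor:absolut-cont} then follow as special cases (for the latter, uniform continuity together with $u\in L^{\past-1}(\R^n)$ forces $u\to0$ at infinity, hence $u\in L^\infty(\R^n)$).

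The genuine substance of the argument lies in Theorem~\ref{th:u-Lpast-1}; granting that, the remaining steps are standard. The only points requiring mild care are the admissibility of the test function $u\eta^p$ and the absorption step, both routine given the interior $C^{1,\alpha}$ regularity of $u$ (one could alternatively truncate $u$, test, and pass to the limit). A minor refinement, not needed here, is that since $u\in L^{\past}(\R^n)$ one may localize the second integral to the annulus $B_{2R}\setminus B_R$ and use the vanishing of the tail of $u^{\past}$ to see that $\int_{B_R}\abs{\nabla u}^p\,dx$ actually converges as $R\to\infty$.
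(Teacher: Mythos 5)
Your proposal is correct and coincides with the paper's primary argument: both first upgrade $u \in L^{\past-1}(\R^n)$ (from Theorem~\ref{th:u-Lpast-1}, i.e.\ Proposition~\ref{prop:int-uv-2}) to $u \in L^{\past}(\R^n)$ by interpolation against the $L^q$ hypothesis, then run a Caccioppoli argument with $\psi = u\eta^p$ plus Young and H\"older to obtain $\nabla u \in L^p(\R^n)$, and conclude by the energy-space classification of~\cite{dm,sciu,vet}. The paper additionally sketches an alternative $P$-function proof avoiding $L^\infty$-estimates, but the route you chose is exactly its first one.
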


	We establish all these conclusions by deriving new optimal and nearly optimal integral estimates for solutions to~\eqref{eq:critica-plap} satisfying~\eqref{eq:ass-inf}. These estimates are optimal or nearly optimal in the sense that they are sharp or nearly sharp for the~$p$-bubbles~\eqref{eq:pbubb-classif}.
	
	Specifically, the key idea is the derivation of integral estimates for~$u^\gamma$ on the ball~$B_R$ in two different regimes:~$\gamma < p_\ast-1$ and~$p_\ast-1 \leq \gamma \leq \past-1$ -- see~\eqref{eq:def-ps-ast} below for the definitions of these exponents.
	
	The first estimate is, to some extend, analogous to the one established on the unit ball by Bidaut-Veron~\cite{bveron-ball}. Nevertheless, Kelvin transform-type arguments do not seem adequate to obtain it directly from~\cite{bveron-ball}. Instead, we derive it by combining the weak Harnack inequality with an estimate due to Shakerian \& V\'etois~\cite{sh-vet}. The estimates in the second regime are obtained via test function arguments as in~\cite{ou}, starting from the previous ones. In particular, we prove that~$u \in L^{\past-1}(\R^n)$, which immediately yields the desired results by interpolation.
	
	In fact, the boundedness assumption can be further relaxed to allow for moderate growth at infinity.
	
	\begin{theorem}
		\label{th:class-crescita}
		Let~$n \in \N$,~$1<p<n$, and let~$u \in W^{1,p}_{\loc}(\R^n) \cap L^\infty_{\loc}(\R^n)$ be a non-negative, non-trivial, local weak solution to~\eqref{eq:critica-plap} satisfying~\eqref{eq:ass-inf}. Suppose, in addition, that
		\begin{equation}
		\label{eq:cond-p-beta}
			1<p<\frac{n^2+2n}{3n+1} \quad\text{and}\quad \beta \coloneqq \frac{2n(n-p)}{n^2+2n-(3n+1)p}
		\end{equation}
		and that, for some~$C>0$, we have
		\begin{equation}
		\label{eq:growth-beta}
			u(x) \leq C \,\abs*{x}^{\beta} \quad\text{for every } \abs*{x} \geq 1.
		\end{equation}
		Then,~$u$ is a~$p$-bubble of the form~\eqref{eq:pbubb-classif}.
	\end{theorem}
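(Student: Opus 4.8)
The plan is to reduce Theorem~\ref{th:class-crescita} to the classification of energy solutions. I claim it is enough to promote the growth bound~\eqref{eq:growth-beta} to the global integrability $u\in L^{\past}(\R^n)$: indeed, testing~\eqref{eq:critica-plap} against $u\,\eta^{p}$ with a standard cutoff~$\eta$ and using Young's and H\"older's inequalities then gives $\nabla u\in L^{p}(\R^n)$ as well, so that $u\in\mathcal{D}^{1,p}(\R^n)$ and $u$ is a $p$-bubble by~\cite{dm,vet,sciu}. Observe also that Theorem~\ref{th:u-Lpast-1} already furnishes $u\in L^{\past-1}(\R^n)$ unconditionally; hence, by interpolation, it would suffice to reach $u\in L^{q}(\R^n)$ for a single exponent $q\geq\past$.

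To produce such a $q$, I would re-run the two-regime integral scheme behind Theorems~\ref{th:u-Lpast-1}--\ref{th:class-bounded}, but this time retaining the powers of~$R$ that the growth bound~\eqref{eq:growth-beta} generates rather than discarding them through global boundedness. First, exactly as in the proof of Theorem~\ref{th:u-Lpast-1}, I would combine the weak Harnack inequality for the supersolution~$u$ with the estimate of Shakerian \& V\'etois~\cite{sh-vet} to obtain, for every exponent~$\gamma$ below the first threshold~$p_{\ast}-1$ of~\eqref{eq:def-ps-ast} and all $R\geq 1$, a local bound of the form
\[
\int_{B_R} u^{\gamma}\,dx \;\leq\; C\,R^{\,n-\frac{(n-p)\gamma}{p-1}};
\]
this step does not see~\eqref{eq:growth-beta}. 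Next, following~\cite{ou}, I would feed this into a test-function argument: inserting $\psi=u^{s}\eta^{p}$ into the weak formulation, integrating by parts, absorbing the gradient term via Young's inequality and applying the Sobolev inequality yields, for suitable~$s$, a Moser-type iteration inequality relating $\int_{B_R}u^{\frac{\past}{p}(s+p-1)}\,dx$ to $R^{-p}\int_{B_{2R}}u^{s+p-1}\,dx$ plus a critical term. The critical term would be handled using $u^{\past-1}\in L^{1}(\R^n)$ from Theorem~\ref{th:u-Lpast-1} --- so that $\int_{B_{2R}\setminus B_R}u^{\past-1}\,dx$ is small for large~$R$ --- in competition with the growth factor $R^{\beta}$ on that annulus, while the remaining integrals are estimated by splitting $B_{2R}=B_{1}\cup(B_{2R}\setminus B_{1})$ and using the first-regime bound on~$B_{1}$ and~\eqref{eq:growth-beta} on $B_{2R}\setminus B_{1}$. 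Iterating, the integrability exponent is multiplied by $\past/p>1$ at each stage while the accompanying power of~$R$ is transformed according to an explicit affine rule, and the point is that the choice of~$\beta$ in~\eqref{eq:cond-p-beta} --- together with the range $1<p<\frac{n^{2}+2n}{3n+1}$ making it positive and finite --- is precisely what keeps this power controlled long enough for the iteration to cross the exponent~$\past$.

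The main obstacle is this last bookkeeping: one must keep exact track of how the power of~$R$ evolves along the iteration, check that it does not blow up before the exponent reaches~$\past$, and correctly balance the smallness of $\int_{B_{2R}\setminus B_R}u^{\past-1}\,dx$ against the growth factor $R^{\beta}$ in the critical term --- it is this balance that pins down~$\beta$ as in~\eqref{eq:cond-p-beta}, and the borderline exponent $\gamma=p_{\ast}-1$ where the two regimes meet, as well as the matching of the far-field region with the unit ball, require separate care. A secondary technical point is that the bounds imported from~\cite{sh-vet}, and the constants that appear if one exploits the full equation rather than merely $\Delta_p u\leq 0$, involve the solution through local quantities that under~\eqref{eq:growth-beta} grow polynomially in~$R$; one must verify that this growth is slow enough to be absorbed into the final estimate. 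Once $u\in\mathcal{D}^{1,p}(\R^n)$ is secured, the classification of energy solutions of~\cite{dm,vet,sciu} finishes the proof.
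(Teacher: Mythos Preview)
Your plan diverges sharply from the paper's argument, and the central step has a structural gap rather than a bookkeeping issue. The paper does \emph{not} attempt to place $u$ in $\mathcal{D}^{1,p}(\R^n)$ under~\eqref{eq:growth-beta}; it uses the $P$-function~\eqref{eq:defPfunct-class} directly. By the differential inequality of~\cite{ou} (see also~\cite{vet-plap}), classification follows once one shows, for small $\varepsilon>0$,
\[
\int_{B_R} v^{1-n}\,P^{-\frac{p-1}{p}+\varepsilon}\,\abs{\nabla v}^{2(p-1)}\,dx \;=\; o(R^2)\qquad(R\to+\infty).
\]
Using $P\geq c\,v^{-1}$ reduces this to bounding $\int_{B_R}v^{2-\frac1p-n-\varepsilon}\abs{\nabla v}^{p-1+p\varepsilon}\,dx$. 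The paper factors out $v^{-\mathtt q}\abs{\nabla v}^{p-1+p\varepsilon}$ with $\mathtt q$ just below the right endpoint in~\eqref{eq:int-tbp-2}, so that Proposition~\ref{prop:est-sharp-grad} bounds this piece by $CR^{\varepsilon}$; the remaining factor $v^{\,2-\frac1p-n-\varepsilon+\mathtt q}$ has exponent $\frac{(3n+1)p-(n^2+2n)}{np}+O(\varepsilon)$, which is negative precisely under~\eqref{eq:cond-p-beta}, and is then controlled by rewriting~\eqref{eq:growth-beta} as the \emph{lower} bound $v(x)\geq C\abs{x}^{-\beta p/(n-p)}$. It is this splitting that produces the exact value of~$\beta$.

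Your Moser scheme, by contrast, aims at a \emph{uniform} bound on $\int u^{\past}$, and that is not attainable from~\eqref{eq:growth-beta}. Since $u^{\past-1}\in L^1(\R^n)$ carries no quantitative rate, the only way to bound the critical term is $u^{s}\leq CR^{\beta s}$ on $B_{2R}$, giving $\int_{B_{2R}}u^{s+\past-1}\,dx\leq CR^{\beta s}$; after one Sobolev step at $s=1$ this yields $\int_{B_R}u^{\past}\,dx\leq CR^{\frac{n}{n-p}\beta}$ with a strictly positive exponent. Iterating only makes things worse: writing $u^{\past-p+q_k}=u^{\past-p}u^{q_k}\leq CR^{\beta(\past-p)}u^{q_k}$, the accompanying power of $R$ obeys an affine recursion with ratio $\tfrac{n}{n-p}>1$ and positive forcing term, so it diverges and $u\in L^{\past}(\R^n)$ never follows. (Incidentally, the argument in~\cite{ou} that you invoke is exactly the $P$-function computation, not a Moser iteration.) The key advantage of the paper's route is that it needs only $o(R^2)$ growth of an integral, not a bound uniform in $R$, and this slack is precisely what the polynomial growth hypothesis can supply.
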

	
	
	\subsection*{Structure of the paper.}
	
	In Section~\ref{sec:int-est}, we introduce some auxiliary functions that will be used throughout the manuscript and establish some integral estimates for~$u$. These include Theorem~\ref{th:u-Lpast-1}. In Section~\ref{sec:proof-12-13}, we prove Theorems~\ref{th:class-bounded},~\ref{th:class-Lq}, and~\ref{th:class-crescita} and Corollary~\ref{cor:absolut-cont}. Finally, in Appendix~\ref{sec:furt-int-est}, we derive other integral estimates involving the gradient.
	
	
	\section{Integral estimates for~$u$}
	\label{sec:int-est}
	
	Since~$u \in L^\infty_{\loc}(\R^n)$, it is well known~\cite{diben,lad-ur,tolk} that~$u \in C^{1,\alpha}_{\loc}(\R^n)$ for some~$\alpha \in (0,1)$. Moreover, since~$u \geq 0$ in~$\R^n$ is non-trivial, the strong maximum principle in~\cite{vaz} implies that~$u > 0$ in~$\R^n$.
	
	On top of this, we introduce the auxiliary function
	\begin{equation}
		\label{eq:defv-class}
		v \coloneqq u^{-\frac{p}{n-p}}.
	\end{equation}
	We also define the so-called~$P$-function given by
	\begin{equation}
	\label{eq:defPfunct-class}
		P \coloneqq n \,\frac{p-1}{p} v^{-1} \, \abs*{\nabla v}^p + \left(\frac{p}{n-p}\right)^{\! p-1} v^{-1}.
	\end{equation}
	This has been used in~\cite{ou,sun-wang,vet-plap} to prove the classification result of Theorem~\ref{th:class-noener}, in~\cite{cami} for the semilinear case on Riemannian manifolds, and also in~\cite{cg-plap} to tackle quantitative stability issues related to~\eqref{eq:critica-plap}.
	
	Let us define the critical set of~$u \in W^{1,p}_{\loc}(\R^n) \cap L^\infty_{\loc}(\R^n)$ as
	\begin{equation*}
		\mathcal{Z}_u \coloneqq \left\{x \in \R^n \,\lvert\, \nabla u(x)=0 \right\}\!,
	\end{equation*}
	and the stress field associated with~$u$ by
	\begin{equation*}
		\stressu \coloneqq \abs*{\nabla u}^{p-2} \,\nabla u,
	\end{equation*}
	which is extended to zero on~$\mathcal{Z}_u$. Then, thanks to the regularity results in~\cite{carlos} and~\cite{lou}, the set~$\mathcal{Z}_u$ is negligible, i.e.~$\abs*{\mathcal{Z}_u}=0$, and
	\begin{gather*}
		\notag
		\stressu \in W^{1,2}_{\loc}(\R^n), \\
		\label{eq:regu-classif}
		u \in W^{2,2}_{\loc}(\R^n \setminus \mathcal{Z}_u) \quad \mbox{and} \quad \abs*{\nabla u}^{p-2} \,\nabla^2 u \in L^{2}_{\loc}(\R^n \setminus \mathcal{Z}_u) \quad \mbox{for every } 1<p<n.
	\end{gather*}
	From its definition in~\eqref{eq:defv-class},~$v$ inherits some regularity properties from~$u$, in particular
	\begin{gather*}
		\notag
		\mathcal{Z}_v = \mathcal{Z}_u, \\
		\label{eq:regv-classif}
		v \in C^{1,\alpha}_{\loc}(\R^n) \cap C^{\infty} \!\left(\R^n \setminus \mathcal{Z}_v\right) \quad \mbox{and} \quad \stressv \in W^{1,2}_{\loc}(\R^n), \\
		\notag
		v \in W^{2,2}_{\loc}(\R^n \setminus \mathcal{Z}_v) \quad \mbox{and} \quad \abs*{\nabla v}^{p-2} \,\nabla^2 v \in L^{2}_{\loc}(\R^n \setminus \mathcal{Z}_v) \quad \mbox{for every } 1<p<n.
	\end{gather*}
	Moreover,~$v$ is a weak solution to
	\begin{equation}
	\label{eq:eq-for-v-classif}
		\Delta_{p} v = P \quad\text{in } \R^n.
	\end{equation}
	
	To keep the notation concise, we also define the couple of critical exponents associated with~$u$ as
	\begin{equation}
	\label{eq:def-ps-ast}
		p_\ast \coloneqq \frac{p(n-1)}{n-p} \quad\text{and}\quad \past \coloneqq \frac{np}{n-p},
	\end{equation}
	and the exponent related to~$v$ as
	\begin{equation}
		\label{eq:psharp}
		p_\sharp \coloneqq \frac{p-1}{p} n.
	\end{equation}
	In particular, observe that~$p_\ast$ and~$p_\sharp$ satisfy the relation
	\begin{equation*}
		p_\ast - 1 = \frac{p}{n-p} \, p_\sharp.
	\end{equation*}
	 
	Recall also that, since~$u$ is weakly~$p$-superharmonic, Lemma 2.3 in~\cite{serr-zou} implies that
	\begin{equation}
		\label{eq:bbu-serzou}
		u(x) \geq C \,\abs*{x}^{-\frac{n-p}{p-1}} \quad\text{for all } x \in \R^n \setminus B_1,
	\end{equation}
	for some~$C>0$ depending only on~$n$,~$p$, and~$\min_{\partial B_1} u$. Hence, exploiting the definition of~$v$ in~\eqref{eq:defv-class}, we deduce that
	\begin{equation}
	\label{eq:babv-serzou}
		v(x) \leq C \,\abs*{x}^{\frac{p}{p-1}} \quad\text{for all } x \in \R^n \setminus B_1,
	\end{equation}
	for some~$C>0$ depending on~$n$,~$p$, and~$\min_{\partial B_1} u$.
	
	We point out that assumption~\eqref{eq:ass-inf} and~\eqref{eq:bbu-serzou} imply that the infimum of~$u$ on~$B_R$ has the same decay rate of the fundamental solution. Moreover, it is easy to see that
	\begin{equation*}
		\inf_{B_R} u \leq C R^{-{\frac{n-p}{p}}} \quad\text{for every } R \geq 1.
	\end{equation*}
	
	The following result provides optimal integral estimates for a positive weak solution to~\eqref{eq:critica-plap} in the regime of small exponents. This is the only point where assumption~\eqref{eq:ass-inf} is required.
	
	\begin{proposition}
		\label{prop:int-uv-1}
		Let~$n \in \N$,~$1<p<n$, and let~$u \in W^{1,p}_{\loc}(\R^n) \cap L^\infty_{\loc}(\R^n)$ be a non-negative, non-trivial, local weak solution to~\eqref{eq:critica-plap} satisfying~\eqref{eq:ass-inf}. Then, there exists~$R_0 \geq 1$ such that for every~$R \geq R_0$ and for either~$E = B_R$ or~$E=A_{R,3R} \coloneqq B_{3R} \setminus B_R$, we have
		\begin{equation}
			\label{eq:int-u-gamma}
			\int_{E} u^\gamma \, dx \leq C R^{n-\frac{n-p}{p-1} \gamma} \quad\text{for every } \gamma \in \left(-\infty,p_\ast-1 \right)\!,
		\end{equation}
		where~$C>0$ is a constant independent of~$R$. Equivalently, it follows that
		\begin{equation*}
			\label{eq:int-v-q}
			\int_{E} v^{-q} \, dx \leq C R^{n-\frac{p}{p-1} q} \quad\text{for every } q \in \left(-\infty,p_\sharp \right)\!,
		\end{equation*}
		where~$v$ and~$p_\sharp$ are defined in~\eqref{eq:defv-class} and~\eqref{eq:psharp}, respectively.
	\end{proposition}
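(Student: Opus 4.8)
Here is the plan. Throughout I take for granted the properties of $u$ recalled just before the statement ($u\in C^{1,\alpha}_{\loc}$, $u>0$ on $\R^n$, $u$ weakly $p$‑superharmonic), as well as the Serrin--Zou bound~\eqref{eq:bbu-serzou}.

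\textbf{Reduction and strategy.} The plan is to prove only the first estimate~\eqref{eq:int-u-gamma}; the second one then follows by a change of variables, since $v = u^{-p/(n-p)}$ from~\eqref{eq:defv-class} gives $u^\gamma = v^{-q}$ with $q = \frac{n-p}{p}\gamma$, the constraint $\gamma<p_\ast-1$ becomes $q<p_\sharp$ (using $p_\ast-1 = \frac{n(p-1)}{n-p}$ and $p_\sharp = \frac{(p-1)n}{p}$), and the exponent on the right transforms as $n-\frac{n-p}{p-1}\gamma = n-\frac{p}{p-1}q$. I would fix $R_0\ge 1$ so that~\eqref{eq:bbu-serzou} holds on $\R^n\setminus B_1$, and argue separately for $\gamma\le 0$ and for $0<\gamma<p_\ast-1$. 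Moreover, since $u^\gamma\ge 0$ and $A_{R,3R}\subset B_{3R}$, the case $E = A_{R,3R}$ reduces immediately to the case $E = B_{3R}$: as the exponent $n-\frac{n-p}{p-1}\gamma$ does not depend on $R$, the bound for $B_{3R}$ (with $3R\ge R_0$) yields the bound for $A_{R,3R}$. Hence it is enough to treat $E = B_R$.

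\textbf{The range $\gamma\le 0$.} Here the argument is elementary and uses only positivity and $p$‑superharmonicity. Since $\gamma\le 0$, raising~\eqref{eq:bbu-serzou} to the power $\gamma$ (which reverses the inequality) gives $u(x)^\gamma\le C\,\abs*{x}^{-\frac{n-p}{p-1}\gamma}$ for $\abs*{x}\ge 1$, with exponent $-\frac{n-p}{p-1}\gamma\ge 0$. Integrating over $B_R\setminus B_1$ produces a term bounded by $C\int_1^R r^{\,n-1-\frac{n-p}{p-1}\gamma}\,dr\le C R^{\,n-\frac{n-p}{p-1}\gamma}$, because the power of $r$ exceeds $-1$. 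On $B_1$, the strong maximum principle gives $u>0$ on the compact set $\overline{B_1}$, so $u^\gamma$ is bounded there and $\int_{B_1} u^\gamma\le C\le C R^{\,n-\frac{n-p}{p-1}\gamma}$, since the exponent is at least $n$ and $R\ge 1$.

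\textbf{The range $0<\gamma<p_\ast-1$.} The key observation is that the upper endpoint $p_\ast-1 = \frac{n(p-1)}{n-p}$ is exactly the borderline exponent in the weak Harnack inequality for non‑negative $p$‑supersolutions. Since $u$ solves $\Delta_p u = -u^{\past-1}\le 0$ in $\R^n$, it is such a supersolution; and because the right‑hand side has the favourable sign, the dilation $y\mapsto u(x_0+\rho y)$ again yields a $p$‑supersolution on the unit ball with constants independent of $\rho$ and $x_0$, so the inequality is available in scale‑invariant form: for each $\gamma\in(0,p_\ast-1)$ there is $C = C(n,p,\gamma)$ with $\int_{B_R(0)} u^\gamma\,dx\le C R^{\,n}\bigl(\inf_{B_R(0)} u\bigr)^{\gamma}$ for all $R>0$. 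It remains to control the infimum, and this is where the estimate of Shakerian \& V\'etois~\cite{sh-vet} enters: it supplies the matching rate $\inf_{B_R(0)} u\le C R^{-\frac{n-p}{p-1}}$ for $R\ge R_0$, with $C$ depending on $u$ but not on $R$. Combining the two bounds gives $\int_{B_R} u^\gamma\,dx\le C R^{\,n-\frac{n-p}{p-1}\gamma}$, which is~\eqref{eq:int-u-gamma}.

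\textbf{Main obstacle.} The only non‑routine ingredient is the sharp upper bound $\inf_{B_R(0)} u\le C R^{-(n-p)/(p-1)}$ extracted from~\cite{sh-vet}; everything else is the weak Harnack inequality together with the elementary consequence~\eqref{eq:bbu-serzou} of $p$‑superharmonicity. One should also be careful to invoke the weak Harnack inequality in its genuinely scale‑invariant form — legitimate here precisely because $-u^{\past-1}\le 0$, so that dilating to the unit ball still leaves a $p$‑supersolution with universal constants; this would fail for a generic inhomogeneous right‑hand side. A final minor point is to verify that~\cite{sh-vet} indeed controls the infimum over balls centered at the origin, which is all that is needed once the annular case has been folded into the ball case.
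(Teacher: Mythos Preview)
Your proof is correct and follows essentially the same approach as the paper: weak Harnack for $p$-supersolutions combined with the upper bound on $\min_{\partial B_R} u$ (equivalently on $\inf_{B_R} u$) extracted from Shakerian--V\'etois~\cite{sh-vet}, together with the elementary Serrin--Zou bound~\eqref{eq:bbu-serzou} for $\gamma\le 0$. The paper packages the argument via the rescaling $u_R(x)=R^{(n-p)/(p-1)}u(Rx)$ and the quantity $\Gamma_R(u)=\min_{\partial B_1}u_R$, then applies weak Harnack on the unit ball; you apply weak Harnack directly at scale $R$ and invoke $\inf_{B_R}u\le C R^{-(n-p)/(p-1)}$ --- the two are equivalent after a change of variables. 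One genuine simplification in your write-up: you reduce the annulus case $E=A_{R,3R}$ to the ball case by the trivial inclusion $A_{R,3R}\subset B_{3R}$, whereas the paper runs a separate Harnack-chain argument on $\partial B_1$; your reduction is shorter and loses nothing.
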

	\begin{proof}
		We begin by defining, for~$R>0$, the continuous function
		\begin{equation}
			\label{eq:def-uR}
			u_R(x) \coloneqq R^{\frac{n-p}{p-1}} u(Rx) \quad\text{for } x \in \R^n
		\end{equation}
		and the quantity
		\begin{equation*}
			\Gamma_{\! R}(u) \coloneqq \min_{\partial B_1} u_R.
		\end{equation*}
		From~\eqref{eq:bbu-serzou}, we clearly have that
		\begin{equation*}
			\liminf_{R \to +\infty} \Gamma_{\! R}(u) = \sigma >0.
		\end{equation*}
		Moreover, by assumption~\eqref{eq:ass-inf} we have that~$\sigma$ is finite and  by the argument of Step~4.2 in~\cite{sh-vet}, we obtain
		\begin{equation}
			\label{eq:lim-alpha}
			\lim_{R \to +\infty} \Gamma_{\! R}(u) = \sigma.
		\end{equation}
		
		Then, we easily see that~$u_R$ is weakly~$p$-superharmonic in~$\R^n$, therefore it satisfies the weak Harnack inequality -- see, for instance,~\cite[Theorem~7.1.2]{ps} -- and for every ball~$B_\rho(x) \subseteq \R^n$ we have  
		\begin{equation}
			\label{eq:wh-uR}
			\rho^{-\frac{n}{\gamma}} \left(\int_{B_{\rho}(x)} u_R^\gamma \, dy \right)^{\!\!\frac{1}{\gamma}} \leq C_\sharp \inf_{B_{2\rho}(x)} u_R \quad\text{for every } \gamma \in \left(0,p_\ast-1 \right)\!,
		\end{equation}
		where~$C_\sharp>0$ depends only on~$n$,~$p$, and~$\gamma$.
		
		Applying~\eqref{eq:wh-uR} with~$x=0$ and~$\rho=1$ entails that
		\begin{equation}
			\label{eq:wh-B1}
			\left(\int_{B_{1}} u_R^\gamma \, dx \right)^{\!\!\frac{1}{\gamma}} \leq C_\sharp \inf_{B_{2}} u_R \leq C_\sharp \min_{\partial B_{1}} u_R = C_\sharp \,\Gamma_{\! R}(u),
		\end{equation}
		for~$\gamma \in \left(0,p_\ast-1 \right)$. From~\eqref{eq:lim-alpha}, we deduce that there exists~$R_0 \geq 1$ such that
		\begin{equation*}
			\Gamma_{\! R}(u) \leq 2 \sigma \quad\text{for every } R \geq R_0,
		\end{equation*}
		thus, combining with~\eqref{eq:wh-B1}, we have
		\begin{equation*}
			\int_{B_{1}} u_R^\gamma \, dx \leq \left(2 C_\sharp \sigma\right)^{\gamma} \quad\text{for every } \gamma \in \left(0,p_\ast-1 \right)\!,
		\end{equation*}
		for every~$R \geq R_0$. Finally, recalling~\eqref{eq:def-uR} and changing variables, we infer the validity of~\eqref{eq:int-u-gamma} for~$E=B_R$ and~$\gamma >0$.
		
		To prove~\eqref{eq:int-u-gamma} for~$E=A_{R,3R}$, we observe that
		\begin{equation}
			\label{eq:wh-annulus}
			\left(\int_{A_{\frac12,\frac32}} u_R^\gamma \, dx \right)^{\!\!\frac{1}{\gamma}} \leq C \min_{\partial B_{1}} u_R = C \,\Gamma_{\! R}(u),
		\end{equation}
		for some~$C>0$. Indeed, one recovers the annulus by a family of balls~$\{B_1(x_i)\}_{i=0}^k$ with~$\abs*{x_i}=1$,~$\abs*{x_i-x_{i-1}} \leq 1$ for~$i=1,\dots,k$, and~$u_R(x_0) = \Gamma_{\! R}(u)$. Moreover,~$k$ is independent of~$R$. A standard chaining argument yields~\eqref{eq:wh-annulus}, from which, thanks to the previous reasoning, we get the desired conclusion.
		
		Finally, when~$\gamma \leq 0$, the conclusion follows directly from~\eqref{eq:bbu-serzou}.
	\end{proof}
	
	We now aim to show that a positive weak solution to~\eqref{eq:critica-plap} enjoying~\eqref{eq:ass-inf} satisfies nearly optimal estimates also in the regime of large exponents. In particular, these estimates show that~$u \in L^{p^*-1}(\R^n)$ and, hence, they include Theorem~\ref{th:u-Lpast-1}.
	
	\begin{proposition}
		\label{prop:int-uv-2}
		Let~$n \in \N$,~$1<p<n$, and let~$u \in W^{1,p}_{\loc}(\R^n) \cap L^\infty_{\loc}(\R^n)$ be a non-negative, non-trivial, local weak solution to~\eqref{eq:critica-plap} satisfying~\eqref{eq:ass-inf}. Moreover, let~$q \in [p_\sharp,p_\sharp+1)$. Then, exists~$\varepsilon_0 \in (0,1)$, depending only on~$n$,~$p$, and~$q$, such that for every~$R \geq R_0$ -- where~$R_0$ is defined in Proposition~\ref{prop:int-uv-1} -- and~$\varepsilon \in (0,\varepsilon_0)$, we have
		\begin{equation}
			\label{eq:int-v-q-large}
			\int_{B_R} v^{-q} \, dx \leq  C R^{\varepsilon} \quad\text{for every } q \in [p_\sharp,p_\sharp+1),
		\end{equation}
		where~$C>0$ is a constant independent of~$R$. Moreover, it holds
		\begin{equation}
			\label{eq:int-v-q-psharp-1}
			\int_{B_R} v^{-p_\sharp-1} \, dx \leq  C.
		\end{equation}
		As a consequence,~$u \in L^{\past-1}(\R^n)$.
	\end{proposition}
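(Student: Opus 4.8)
The plan is to derive everything from the endpoint bound~\eqref{eq:int-v-q-psharp-1}, which carries all the weight. First, from~\eqref{eq:defv-class} and~\eqref{eq:psharp} one has the algebraic identity~$v^{-p_\sharp-1}=u^{\past-1}$, so that~\eqref{eq:int-v-q-psharp-1}, read together with the monotone convergence theorem as~$R\to+\infty$, gives at once~$u\in L^{\past-1}(\R^n)$. Second,~\eqref{eq:int-v-q-large} follows by interpolation: given~$q\in[p_\sharp,p_\sharp+1)$, write~$q=(1-\theta)q_1+\theta(p_\sharp+1)$ with~$q_1<p_\sharp$ and~$\theta\in[0,1)$, and apply Hölder's inequality, Proposition~\ref{prop:int-uv-1} at the level~$q_1$, and~\eqref{eq:int-v-q-psharp-1}; this bounds~$\int_{B_R}v^{-q}\,dx$ by a constant times~$R^{(1-\theta)(n-\frac{p}{p-1}q_1)}$, and since~$n-\frac{p}{p-1}q_1=\frac{p}{p-1}(p_\sharp-q_1)\to0$ as~$q_1\uparrow p_\sharp$ while~$1-\theta$ stays bounded, the exponent can be made smaller than any prescribed~$\varepsilon\in(0,\varepsilon_0)$ by choosing~$q_1$ close enough to~$p_\sharp$ (this determines~$\varepsilon_0$ and the constant, both depending on~$q$). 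So it only remains to prove~\eqref{eq:int-v-q-psharp-1}.

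To this end, fix~$R\geq R_0$ (the case~$R\leq R_0$ being trivial, as~$v^{-p_\sharp-1}=u^{\past-1}$ is locally integrable) and a cut-off~$\eta\in C^\infty_c(B_{3R})$ with~$\eta\equiv1$ on~$B_R$,~$0\leq\eta\leq1$, and~$\abs*{\nabla\eta}\leq C/R$, so that~$\nabla\eta$ is supported in~$A_{R,3R}$. I would test the weak form of equation~\eqref{eq:eq-for-v-classif} with the function~$v^{-p_\sharp}\eta^p$ (admissible, since~$v$ is positive and locally~$C^{1,\alpha}$, hence locally bounded away from zero, so that~$v^{-p_\sharp}\eta^p\in W^{1,p}$ has compact support). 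Expanding the gradient and using the explicit expression~$P=p_\sharp v^{-1}\abs*{\nabla v}^p+\left(\frac{p}{n-p}\right)^{p-1}v^{-1}$ from~\eqref{eq:defPfunct-class} and~\eqref{eq:psharp}, the two contributions proportional to~$v^{-p_\sharp-1}\eta^p\abs*{\nabla v}^p$ — one from the integration by parts, with coefficient~$-p_\sharp$, the other from the gradient term of~$P$, again with coefficient~$-p_\sharp$ — cancel \emph{exactly} (this is the only point where the precise value~$p_\sharp=n(p-1)/p$ is used), leaving
\begin{equation*}
	\left(\frac{p}{n-p}\right)^{p-1}\int_{\R^n}v^{-p_\sharp-1}\eta^p\,dx=-p\int_{\R^n}v^{-p_\sharp}\eta^{p-1}\abs*{\nabla v}^{p-2}\left\langle\nabla v,\nabla\eta\right\rangle\,dx.
\end{equation*}
Bounding the right-hand side by~$p\int v^{-p_\sharp}\eta^{p-1}\abs*{\nabla v}^{p-1}\abs*{\nabla\eta}\,dx$ and using~$\abs*{\nabla\eta}\leq C/R$, the claim~\eqref{eq:int-v-q-psharp-1} reduces to the \emph{sharp} bound~$\int_{A_{R,3R}}v^{-p_\sharp}\abs*{\nabla v}^{p-1}\,dx\leq C\,R$, with~$C$ independent of~$R$.

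For this, fix once and for all~$\alpha\in(p_\sharp,p_\sharp+1)$ and split, via Hölder's inequality with exponents~$\frac{p}{p-1}$ and~$p$,
\begin{equation*}
	\int_{A_{R,3R}}v^{-p_\sharp}\abs*{\nabla v}^{p-1}\,dx\leq\left(\int_{A_{R,3R}}v^{-\alpha}\abs*{\nabla v}^p\,dx\right)^{\frac{p-1}{p}}\left(\int_{A_{R,3R}}v^{-\beta}\,dx\right)^{\frac1p},\qquad\beta\coloneqq pp_\sharp-(p-1)\alpha.
\end{equation*}
Since~$p_\sharp<\alpha<p_\sharp+1<n$ one checks that~$\beta\in(0,p_\sharp)$, so Proposition~\ref{prop:int-uv-1} gives~$\int_{A_{R,3R}}v^{-\beta}\,dx\leq C R^{n-\frac{p}{p-1}\beta}$. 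For the gradient integral I would first establish a Caccioppoli-type inequality at the level~$\alpha$: testing~\eqref{eq:eq-for-v-classif} with~$v^{1-\alpha}\tilde\eta^p$, where~$\tilde\eta\in C^\infty_c(B_{4R})$,~$\tilde\eta\equiv1$ on~$A_{R,3R}$ and~$\abs*{\nabla\tilde\eta}\leq C/R$, the~$v^{-\alpha}\tilde\eta^p\abs*{\nabla v}^p$ terms now combine with the \emph{strictly positive} coefficient~$1-\alpha+p_\sharp\in(0,1)$, so that absorbing the term involving~$\nabla\tilde\eta$ by Young's inequality (with an~$R$-independent constant) and using Proposition~\ref{prop:int-uv-1} at the level~$\alpha-p<p_\sharp$ yields
\begin{equation*}
	\int_{A_{R,3R}}v^{-\alpha}\abs*{\nabla v}^p\,dx\leq\frac{C}{R^p}\int_{B_{4R}}v^{-(\alpha-p)}\,dx\leq C R^{\frac{p}{p-1}(p_\sharp+1-\alpha)}.
\end{equation*}
Inserting both estimates into the Hölder inequality, the exponent of~$R$ becomes
\begin{equation*}
	\frac{p-1}{p}\cdot\frac{p}{p-1}(p_\sharp+1-\alpha)+\frac1p\left(n-\frac{p}{p-1}\beta\right)=(p_\sharp+1-\alpha)+\frac np-\frac{\beta}{p-1},
\end{equation*}
and substituting~$\beta=pp_\sharp-(p-1)\alpha$ makes the~$\alpha$'s cancel, leaving~$p_\sharp+1+\frac np-\frac{pp_\sharp}{p-1}=1$ by~\eqref{eq:psharp}. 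This gives the reduced bound, hence~\eqref{eq:int-v-q-psharp-1}, and the proof is complete.

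The main obstacle, I expect, is not this chain of computations but the rigorous handling of the two test-function arguments given the limited regularity of~$v$ near its critical set~$\mathcal{Z}_v$: one must approximate~$v^{-p_\sharp}\eta^p$ and~$v^{1-\alpha}\tilde\eta^p$ by genuine test functions and pass to the limit in the integrations by parts — here the~$W^{1,2}_{\loc}$-regularity of~$\stressv$ recorded above is the natural tool — and one must ensure the Young absorption in the Caccioppoli step is carried out with a constant independent of~$R$. Conceptually, the delicate feature is that the reduced estimate \emph{must} be sharp: the power~$R^1$ it produces is exactly cancelled by the~$R^{-1}$ in front of it, so no loss is affordable, and this is precisely what the cancellation of~$\alpha$ encodes — which in turn rests on the optimality of the estimates in Proposition~\ref{prop:int-uv-1}.
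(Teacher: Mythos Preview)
Your proof is correct and follows essentially the same route as the paper: both arguments test~\eqref{eq:eq-for-v-classif} with~$v^{-p_\sharp}$ times a cut-off to exploit the exact cancellation of the~$v^{-p_\sharp-1}\abs{\nabla v}^p$ terms, bound the resulting annular integral~$\int v^{-p_\sharp}\abs{\nabla v}^{p-1}$ via the same H\"older split combined with a Caccioppoli inequality (your estimate at level~$\alpha$ is precisely the paper's claim~\eqref{eq:1-int-toprove} with~$q=\alpha$), and use Proposition~\ref{prop:int-uv-1} to close. The only structural difference is the order: the paper first isolates the Caccioppoli bound~\eqref{eq:1-int-toprove} for all~$q<p_\sharp+1$ and interpolates~\eqref{eq:int-v-q-large} between it and Proposition~\ref{prop:int-uv-1}, whereas you go straight for the endpoint~\eqref{eq:int-v-q-psharp-1} and then interpolate~\eqref{eq:int-v-q-large} between it and Proposition~\ref{prop:int-uv-1}.
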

	\begin{proof}
		In the following calculations~$C>0$ will denote a constant independent of~$R$, which may vary from line to line.
		
		We first claim that
		\begin{equation}
			\label{eq:1-int-toprove}
			\int_{B_R} v^{-q} \,\abs*{\nabla v}^p \, dx + \int_{B_R} v^{-q} \, dx \leq C R^{n-\frac{p}{p-1}(q-1)} \quad\text{for every } q < p_\sharp+1.
		\end{equation}
		
		Indeed, let~$\theta>p$ be fixed and~$\eta \in C^\infty_c(\R^n)$ be a cut-off function such that~$0 \leq \eta\leq 1$ in~$\R^n$,~$\eta = 1$ in~$B_R$,~$\eta=0$ in~$\R^n \setminus B_{2R}$, and~$\abs*{\nabla \eta} \leq 1/R$ in~$A_{R,2R}$. Testing~\eqref{eq:eq-for-v-classif} with~$\psi=v^{1-q} \eta^\theta$ and applying Young's inequality yield
		\begin{multline*}
			\left(p_\sharp+1-q\right) \int_{\R^n} \eta^\theta v^{-q} \,\abs*{\nabla v}^p \, dx + \left(\frac{p}{n-p}\right)^{\! p-1} \int_{\R^n} \eta^\theta v^{-q} \, dx \\
			\leq \varepsilon \int_{\R^n} \eta^\theta v^{-q} \,\abs*{\nabla v}^p \, dx + \frac{C}{\varepsilon^{p-1} R^{p}} \int_{\R^n} \eta^{\theta-p} v^{p-q} \, dx,
		\end{multline*}
		for~$\varepsilon>0$. By choosing~$\varepsilon>0$ sufficiently small, and since~$q < p_\sharp+1$, we deduce that
		\begin{equation*}
			\int_{\R^n} \eta^\theta v^{-q} \,\abs*{\nabla v}^p \, dx + \int_{\R^n} \eta^\theta v^{-q} \, dx \leq \frac{C}{R^{p}} \int_{B_{2R}} v^{p-q} \, dx.
		\end{equation*}
		On the other hand, we also have~$q-p<p_\sharp$ and since~$R \geq R_0$, applying Proposition~\ref{prop:int-uv-1}, we deduce that
		\begin{equation*}
			\int_{B_R} v^{-q} \,\abs*{\nabla v}^p \, dx + \int_{B_R} v^{-q} \, dx \leq C R^{n-\frac{p}{p-1}(q-p)-p},
		\end{equation*}
		hence~\eqref{eq:1-int-toprove} follows.
		
		Let~$q \in [p_\sharp,p_\sharp+1)$ be fixed. There exists~$\varepsilon_0 \in (0,1)$, depending only on~$n$,~$p$, and~$q$, such that
		\begin{equation*}
			s \coloneqq p_\sharp - \varepsilon < q < p_\sharp+1 -\varepsilon \eqqcolon r
		\end{equation*}
		for every~$\varepsilon \in (0,\varepsilon_0)$. Hence, Proposition~\ref{prop:int-uv-1} and~\eqref{eq:1-int-toprove} yield
		\begin{equation*}
			\int_{B_R} v^{-s} \, dx \leq C R^{\frac{p}{p-1}\varepsilon} \quad\text{and}\quad \int_{B_R} v^{-r} \, dx \leq C R^{\frac{p}{p-1}\varepsilon},
		\end{equation*}
		respectively. Let~$\lambda \in [0,1]$ be such that
		\begin{equation*}
			\frac{1}{q} = \frac{\lambda}{s} + \frac{1-\lambda}{r},
		\end{equation*}
		thus, by interpolation, we conclude that
		\begin{align*}
			\int_{B_R} v^{-q} \, dx &\leq \left(\int_{B_R} v^{-s} \, dx\right)^{\!\frac{\lambda q}{s}} \left(\int_{B_R} v^{-r} \, dx\right)^{\!\frac{(1-\lambda) q}{r}} \\
			&\leq \left( C R^{\frac{p}{p-1}\varepsilon} \right)^{\!\frac{\lambda q}{s}} \left( C R^{\frac{p}{p-1}\varepsilon} \right)^{\!\frac{(1-\lambda) q}{r}} =  C R^{\frac{p}{p-1}\varepsilon}.
		\end{align*}
		Up to a redefinition of $\varepsilon_0>0$, this shows the validity of~\eqref{eq:int-v-q-large} and we are left with the case~$q = p_\sharp+1$.
		
		Testing~\eqref{eq:eq-for-v-classif} with~$\psi=v^{-p_\sharp}\eta$, we get 
		\begin{equation}
			\label{eq:psharp-1-1}
			\int_{\R^n} \eta v^{-p_\sharp-1} \, dx \leq \frac{C}{R} \int_{A_{R,2R}} v^{-p_\sharp} \,\abs*{\nabla v}^{p-1} \, dx \leq \frac{C}{R} \int_{B_{2R}} v^{-p_\sharp} \,\abs*{\nabla v}^{p-1} \, dx.
		\end{equation}
		Using H\"older inequality, it follows that
		\begin{equation}
			\label{eq:psharp-1-2}
			\begin{split}
				\int_{B_{2R}} v^{-p_\sharp} \,\abs*{\nabla v}^{p-1} \, dx &= \int_{B_{2R}} v^{(\varepsilon-p_\sharp-1)\frac{p-1}{p}} \,\abs*{\nabla v}^{p-1} \, v^{-p_\sharp-(\varepsilon-p_\sharp-1)\frac{p-1}{p}} \, dx \\
				&\leq \left( \int_{B_{2R}} v^{\varepsilon-p_\sharp-1} \,\abs*{\nabla v}^{p} \, dx \right)^{\!\!\frac{p-1}{p}} \left( \int_{B_{2R}} v^{-p_\sharp+(1-\varepsilon)(p-1)} \, dx \right)^{\!\!\frac{1}{p}}\!.
			\end{split}
		\end{equation}
		Choosing
		\begin{equation*}
			0 < \varepsilon < \min\left\{1,\frac{n-p}{p}(p-1)\right\}\!,
		\end{equation*}
		we have that
		\begin{equation*}
			p_\sharp+1-\varepsilon < p_\sharp+1 \quad\text{and}\quad 0 < p_\sharp-(1-\varepsilon)(p-1) < p_\sharp,
		\end{equation*}
		therefore~\eqref{eq:1-int-toprove} entails that
		\begin{equation}
			\label{eq:psharp-1-3}
			\int_{B_{2R}} v^{\varepsilon-p_\sharp-1} \,\abs*{\nabla v}^{p} \, dx \leq C R^{\frac{p}{p-1}\varepsilon},
		\end{equation}
		moreover, Proposition~\ref{prop:int-uv-1} yields
		\begin{equation}
			\label{eq:psharp-1-4}
			\int_{B_{2R}} v^{-p_\sharp+(1-\varepsilon)(p-1)} \, dx \leq C R^{(1-\varepsilon)p}.
		\end{equation}
		As a result, combining~\eqref{eq:psharp-1-1}--\eqref{eq:psharp-1-4} immediately implies the validity of~\eqref{eq:int-v-q-psharp-1}.
		
		Finally, since in~\eqref{eq:int-v-q-psharp-1} the constant~$C$ is independent of~$R$, using the definitions of~$v$ and~$p_\sharp$ given in~\eqref{eq:defv-class} and~\eqref{eq:psharp}, respectively, we deduce that~$u \in L^{\past-1}(\R^n)$.
	\end{proof}
	
	We recall that, in Lemma~2.1 of~\cite{vet-plap}, V\'etois proved the following optimal estimate.
	
	\begin{propositionE}
	\label{prop:est-sharp-vet}
		Let~$n \in \N$,~$1<p<n$, and let~$u \in W^{1,p}_{\loc}(\R^n) \cap L^\infty_{\loc}(\R^n)$ be a non-negative, non-trivial, local weak solution to~\eqref{eq:critica-plap}. Suppose that~$r \in [0,p]$ and~$q < p_\sharp +1$, then for every~$R \geq 1$ we have
		\begin{equation*}
			\int_{B_R} v^{-q} \,\abs{\nabla v}^r \, dx \leq  C R^{n-\frac{pq-r}{p-1}} \quad\text{for every } q \in (-\infty,r),
		\end{equation*}
		where~$C>0$ is a constant independent of~$R$.
	\end{propositionE}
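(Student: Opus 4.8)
The plan is to obtain the estimate by Hölder interpolation between the two endpoint cases $r=p$ and $r=0$, whose sharp versions have been established (or are essentially immediate) above; the exponent on the right-hand side has been tailored so that this interpolation loses nothing.

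For $r=p$ and any $q<p_\sharp+1$, the bound
\[
\int_{B_R} v^{-q}\abs*{\nabla v}^p\,dx \le C R^{\,n-\frac{p(q-1)}{p-1}}
\]
is precisely (the gradient part of) the estimate~\eqref{eq:1-int-toprove}: testing~\eqref{eq:eq-for-v-classif} against $\psi=v^{1-q}\eta^\theta$ with a standard cut-off $\eta$, using $P\ge p_\sharp\,v^{-1}\abs*{\nabla v}^p$, absorbing the gradient term by Young's inequality (which is where $q<p_\sharp+1$ enters, making the remaining coefficient $p_\sharp+1-q$ positive), and controlling the leftover $\int_{B_{2R}} v^{p-q}\,dx$ by Proposition~\ref{prop:int-uv-1}. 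For $r=0$, the bound $\int_{B_R} v^{-q}\,dx\le CR^{\,n-\frac{pq}{p-1}}$ for $q<p_\sharp$ is exactly Proposition~\ref{prop:int-uv-1} (in its $v$-form), the case $q\le 0$ also following directly from~\eqref{eq:babv-serzou}. In both endpoint cases the exponent equals $n-\frac{pq-r}{p-1}$.

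For a general $r\in(0,p)$ and $q<\min\{r,p_\sharp+1\}$ I would pick reals $a,b$ with $ar+b(p-r)=pq$ and write $v^{-q}\abs*{\nabla v}^r=\bigl(v^{-a}\abs*{\nabla v}^p\bigr)^{r/p}\bigl(v^{-b}\bigr)^{(p-r)/p}$, so that Hölder's inequality with exponents $p/r$ and $p/(p-r)$ gives
\[
\int_{B_R} v^{-q}\abs*{\nabla v}^r\,dx \le \Bigl(\int_{B_R} v^{-a}\abs*{\nabla v}^p\,dx\Bigr)^{\!r/p}\Bigl(\int_{B_R} v^{-b}\,dx\Bigr)^{\!(p-r)/p}.
\]
If $a$ and $b$ can be chosen with $a<p_\sharp+1$ and $b<p_\sharp$, then the two endpoint estimates bound the right-hand side by $CR^E$ with
\[
E=\frac{r}{p}\Bigl(n-\frac{p(a-1)}{p-1}\Bigr)+\frac{p-r}{p}\Bigl(n-\frac{pb}{p-1}\Bigr)=n-\frac{(a-1)r+b(p-r)}{p-1}=n-\frac{pq-r}{p-1},
\]
the last identity being forced by $ar+b(p-r)=pq$. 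This is the claimed estimate for $R\ge R_0$; for $1\le R<R_0$ it is trivial, since $v$ is positive and of class $C^1$ on the compact set $\overline{B_{R_0}}$ while $R^{\,n-(pq-r)/(p-1)}$ stays bounded away from $0$ on $[1,R_0]$.

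The only genuinely delicate point is therefore the admissibility of $a,b$: the weight $q=\frac{r}{p}a+\frac{p-r}{p}b$ must be split so that $a$ stays strictly below the threshold $p_\sharp+1$ at which the weighted gradient estimate degenerates and, simultaneously, $b$ stays strictly below $p_\sharp$. On the line $\{ar+b(p-r)=pq\}$ the two corner choices $a=p_\sharp+1$ and $b=p_\sharp$ both satisfy the remaining inequality precisely when $pq<p\,p_\sharp+r$; and since $q<r\le p<n$ one has $q<r<p_\sharp+\frac{r}{p}$, i.e.\ $pq<p\,p_\sharp+r$ with strict inequality, so any interior point of the segment joining the two corners works (and $b$ may well be negative, which is harmless since Proposition~\ref{prop:int-uv-1} covers all exponents below $p_\sharp$). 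For $r=p$ this discussion is empty, consistently with that case being handled directly by~\eqref{eq:1-int-toprove}.
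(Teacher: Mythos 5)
The paper does not prove Proposition~\ref{prop:est-sharp-vet} at all: it is recalled verbatim from Lemma~2.1 of V\'etois~\cite{vet-plap}, so there is no ``paper's own proof'' to compare against. What you have produced is therefore an independent, self-contained derivation from the paper's own earlier material, and it is correct. Your two endpoints are exactly the right ones: $r=0$ is Proposition~\ref{prop:int-uv-1} in its $v$-form (valid for all $q<p_\sharp$, including $q\le 0$ via~\eqref{eq:babv-serzou}), and $r=p$ is estimate~\eqref{eq:1-int-toprove} from the proof of Proposition~\ref{prop:int-uv-2} (valid for all $q<p_\sharp+1$). The H\"older split $v^{-q}\abs*{\nabla v}^r=\bigl(v^{-a}\abs*{\nabla v}^p\bigr)^{r/p}\bigl(v^{-b}\bigr)^{(p-r)/p}$ with $ar+b(p-r)=pq$ is exact, the resulting exponent collapses to $n-\frac{pq-r}{p-1}$ by linearity, and the admissibility check is sound: since $q<r\le p<n$ one has $pq<pr<p\,p_\sharp+r$, so the constraint line $ar+b(p-r)=pq$ does meet the open quadrant $\{a<p_\sharp+1,\ b<p_\sharp\}$ (your slight abuse of language ``the two corner choices \dots both satisfy'' really means the line passes strictly below the corner $(p_\sharp+1,p_\sharp)$, which is what matters). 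You also correctly dispose of $1\le R<R_0$ by compactness, since the exponent $n-\frac{pq-r}{p-1}>n-p>0$ keeps the right-hand side bounded below. One small caveat worth flagging: since your argument invokes~\eqref{eq:1-int-toprove} and Proposition~\ref{prop:int-uv-1}, it re-derives V\'etois's bound from within this paper's framework rather than reproducing V\'etois's original test-function proof; that is perfectly legitimate here because nothing in Propositions~\ref{prop:int-uv-1} or~\ref{prop:int-uv-2} relies on Proposition~\ref{prop:est-sharp-vet}, so there is no circularity.
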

	
	In the following result, we extend Proposition~\ref{prop:est-sharp-vet} by providing new optimal or nearly optimal estimates for the same integral quantity.
	
	\begin{proposition}
		\label{prop:est-sharp-grad}
		Let~$n \in \N$,~$1<p<n$, and let~$u \in W^{1,p}_{\loc}(\R^n) \cap L^\infty_{\loc}(\R^n)$ be a non-negative, non-trivial, local weak solution to~\eqref{eq:critica-plap} satisfying~\eqref{eq:ass-inf}. Suppose that
		\begin{equation}
		\label{eq:range-r-rem}
			r \in \left(0,\frac{n}{n-1} (p-1)\right)\!,
		\end{equation}
		and set
		\begin{equation}
		\label{eq:def-qr}
			q_r \coloneqq \frac{np-n+r}{p} > r.
		\end{equation}
		Then, there exist~$R_1 \geq R_0$ -- where~$R_0$ is defined in Proposition~\ref{prop:int-uv-1} -- and~$\varepsilon_1 \in (0,1)$, depending on~$n$,~$p$,~$r$, and~$q$, such that for every~$R \geq R_1$ and~$\varepsilon \in (0,\varepsilon_1)$, we have
		\begin{gather}
		\label{eq:int-tbp-1}
			\int_{B_{R}} v^{-q} \,\abs*{\nabla v}^{r} \, dx \leq C R^{n-\frac{pq-r}{p-1} + \varepsilon} \quad\text{for every } q \in \left(-\infty,q_r \right) \!, \\
		\label{eq:int-tbp-2}
			\int_{B_{R}} v^{-q} \,\abs*{\nabla v}^{r} \, dx \leq C R^{\varepsilon} \quad\text{for every } q \in \left[ q_r, p_\sharp+1 - \frac{n-p}{np\left(p-1\right)} \, r \right) \!,
		\end{gather}
		where~$C>0$ is a constant independent of~$R$. Moreover, for any~$p \in (1,n)$, it holds
		\begin{equation}
		\label{eq:est-grad-p}
			\int_{B_R} v^{-q} \,\abs*{\nabla v}^p \, dx \leq C R^{n-\frac{p}{p-1}(q-1)} \quad\text{for every } q \in (-\infty,p_\sharp+1).			
		\end{equation}
	\end{proposition}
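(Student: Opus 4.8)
The plan is to treat the three estimates separately but to reduce all of them to what is already available. Estimate~\eqref{eq:est-grad-p} is in fact not new: it coincides with inequality~\eqref{eq:1-int-toprove} obtained in the course of proving Proposition~\ref{prop:int-uv-2}, so one simply records it here, either by quoting that computation or by re-running the test-function argument with $\psi = v^{1-q}\eta^\theta$, Young's inequality, and Proposition~\ref{prop:int-uv-1}. The substance lies in~\eqref{eq:int-tbp-1}--\eqref{eq:int-tbp-2}, and the idea is to reduce the mixed quantity $\int_{B_R} v^{-q}\abs*{\nabla v}^{r}\,dx$ to the two quantities already under control -- namely $\int_{B_R} v^{-\alpha}\abs*{\nabla v}^{p}\,dx$ via~\eqref{eq:est-grad-p} and $\int_{B_R} v^{-b}\,dx$ via Propositions~\ref{prop:int-uv-1} and~\ref{prop:int-uv-2} -- through a single interpolation.

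Concretely, since~\eqref{eq:range-r-rem} forces $r < \tfrac{n(p-1)}{n-1} < p$ (the last inequality being equivalent to $p<n$), for any admissible parameter $\alpha$ one writes
\begin{equation*}
	v^{-q}\,\abs*{\nabla v}^{r} = \bigl(v^{-\alpha}\,\abs*{\nabla v}^{p}\bigr)^{\!\frac{r}{p}}\, v^{-\left(q - \frac{\alpha r}{p}\right)}
\end{equation*}
and applies Hölder's inequality on $B_R$ with exponents $\tfrac{p}{r}$ and $\tfrac{p}{p-r}$, obtaining
\begin{equation*}
	\int_{B_R} v^{-q}\,\abs*{\nabla v}^{r}\,dx \leq \left(\int_{B_R} v^{-\alpha}\,\abs*{\nabla v}^{p}\,dx\right)^{\!\!\frac{r}{p}}\left(\int_{B_R} v^{-b}\,dx\right)^{\!\!\frac{p-r}{p}}, \qquad b \coloneqq \frac{pq - \alpha r}{p-r},
\end{equation*}
so that $\alpha$ and $b$ are tied by the single linear relation $\alpha r + b(p-r) = pq$. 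If one can pick $\alpha < p_\sharp+1$ and $b < p_\sharp$, then~\eqref{eq:est-grad-p} and Proposition~\ref{prop:int-uv-1} bound the two factors by powers of $R$ whose exponents, using $\tfrac{p}{p-1}p_\sharp = n$, add up to exactly $n - \tfrac{pq-r}{p-1}$; and elementary algebra shows that such a choice is possible precisely when $q < q_r$, which gives~\eqref{eq:int-tbp-1} (the $R^{\varepsilon}$ slack is what one pays if, to keep constants under control near $q=q_r$, one instead takes $b \in [p_\sharp, p_\sharp+1)$ and invokes~\eqref{eq:int-v-q-large}). For~\eqref{eq:int-tbp-2} one has $q \geq q_r$, which forces $b > p_\sharp$: here one sets $\alpha = p_\sharp + 1 - c\varepsilon$ with a suitable $c=c(p)>0$, so that~\eqref{eq:est-grad-p} controls the first factor by $R^{O(\varepsilon)}$; the relation $\alpha r + b(p-r) = pq$ then places $b$ in $(p_\sharp, p_\sharp+1)$ as soon as $q < p_\sharp+1 - c'\varepsilon\,r/p$, a threshold which, by a short computation, lies above $p_\sharp+1 - \tfrac{n-p}{np(p-1)}\,r$ once $\varepsilon<\varepsilon_1$ for an explicit $\varepsilon_1=\varepsilon_1(n,p,r)>0$; for such $b$, estimate~\eqref{eq:int-v-q-large} of Proposition~\ref{prop:int-uv-2} bounds the second factor by $R^{O(\varepsilon)}$ as well, and the product is $\leq C R^{\varepsilon}$ after renaming $\varepsilon$.

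I expect the main obstacle to be bookkeeping rather than analysis: one must check that, along the whole range of $q$, the two exponents $\alpha$ and $b$ dictated by $\alpha r + b(p-r) = pq$ can be kept simultaneously inside the admissibility windows of~\eqref{eq:est-grad-p}, Proposition~\ref{prop:int-uv-1}, and~\eqref{eq:int-v-q-large}; that the resulting $R$-powers genuinely add up to the claimed exponent $n - \tfrac{pq-r}{p-1}$ (respectively to $O(\varepsilon)$); and that $\varepsilon_1$ stays bounded below uniformly for $r$ in the range~\eqref{eq:range-r-rem} -- a point which again uses $r < \tfrac{n(p-1)}{n-1} < p$, making the quantity $2p-r$, and hence $\varepsilon_1$, bounded away from $0$. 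One should also keep in mind that the constants coming from the weak Harnack inequality behind Proposition~\ref{prop:int-uv-1} and from $\varepsilon_0$ in Proposition~\ref{prop:int-uv-2} degenerate as the exponent approaches the endpoint $p_\sharp$ (respectively $p_\sharp+1$), so the choices of $\alpha$ and $b$ must remain quantitatively away from those endpoints -- which is exactly why~\eqref{eq:int-tbp-2} is stated only up to $p_\sharp+1 - \tfrac{n-p}{np(p-1)}\,r < p_\sharp+1$ and with an $R^{\varepsilon}$ loss rather than a bounded right-hand side.
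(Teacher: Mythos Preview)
Your approach is correct and genuinely different from the paper's. The paper does not interpolate against~\eqref{eq:est-grad-p}; instead it first invokes an external gradient bound of Bidaut-V\'eron--Pohozaev type, $\bigl(\dashint_{B_R}\abs{\nabla u}^{\gamma}\bigr)^{1/\gamma}\leq CR^{-1}\bigl(\dashint_{B_{2R}}u^{\ell}\bigr)^{1/\ell}$ for $\gamma<\tfrac{n}{n-1}(p-1)$, combines it with Proposition~\ref{prop:int-uv-2} to obtain $\int_{B_R}\abs{\nabla u}^{\gamma}\leq CR^{n-\frac{n-1}{p-1}\gamma+\varepsilon}$, rewrites this via $\nabla u=-\tfrac{n-p}{p}\,v^{-n/p}\nabla v$ as control of $\int_{B_R}v^{-n\gamma/p}\abs{\nabla v}^\gamma$, and only then splits by H\"older against pure $v^{-b}$ integrals. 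The restriction~\eqref{eq:range-r-rem} and the upper endpoint $p_\sharp+1-\tfrac{n-p}{np(p-1)}r$ in~\eqref{eq:int-tbp-2} are artefacts of that external estimate. Your H\"older splitting $(v^{-\alpha}\abs{\nabla v}^p)^{r/p}\cdot v^{-b}$ is more economical: it uses only~\eqref{eq:1-int-toprove} and Propositions~\ref{prop:int-uv-1}--\ref{prop:int-uv-2}, and the relation $\alpha r+b(p-r)=pq$ is precisely what makes the $R$-powers combine to $n-\tfrac{pq-r}{p-1}$. In fact your argument requires only $r<p$, so it yields~\eqref{eq:int-tbp-1} for the full range $r\in(0,p)$ -- which the paper has to establish separately as Proposition~\ref{prop:grad-second} -- and it gives~\eqref{eq:int-tbp-2} all the way to $q<p_\sharp+1$, strictly more than stated. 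The trade-off is that the paper's route delivers, as a by-product, the stand-alone bound on $\int_{B_R}\abs{\nabla u}^\gamma$ that connects with the sharp Marcinkiewicz integrability of $p$-superharmonic functions discussed in Remark~\ref{rem:optim-q}. (Your remark about ``$2p-r$'' is obscure and in any case unnecessary, since $\varepsilon_1$ is allowed to depend on $r$ and $q$.)
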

	
	\begin{remark}
	\label{rem:optim-q}
		We emphasize that the exponent~$q_r$, defined in~\eqref{eq:def-qr}, is precisely the threshold ensuring the integrability of~$v^{-q} \,\abs{\nabla v}^r$ for a~$p$-bubble.
		
		Moreover, we note that the range of~$r$ in~\eqref{eq:range-r-rem} is related to both the regularity theory and the general theory of~$p$-superharmonic functions. Indeed, this range appears in the integral estimate for~$\abs*{\nabla u}$ -- see estimate~\eqref{eq:grad-rewrite} below. Since~$u^{\past-1} \in L^1(\R^n)$, Corollary~1 in~\cite{km-guide} yields the Marcinkiewicz estimate~$\abs{\nabla u} \in L^{\frac{n}{n-1} (p-1),\infty}(\R^n)$. Additionally, the extremal exponent in~\eqref{eq:range-r-rem} should be compared with the maximal integrability exponent of the gradient of general~$p$-superharmonic functions -- see~\cite[Theorems~7.45 and~7.46]{hein-npt}.
	\end{remark}	
	
	\begin{proof}[Proof of Proposition~\ref{prop:est-sharp-grad}]
		In the following calculations~$C>0$ will denote a constant independent of~$R$, which may vary from line to line.
		
		Observe that Formula~(2.21) in~\cite{bveron-poho} implies that, for any~$\ell>p-1$ and~$k>1+1/\ell$, we have
		\begin{equation}
			\label{eq:bv-poho-grad}
			\left(\,\dashint_{B_{R}} \,\abs*{\nabla u}^{\frac{p}{k}} \, dx \right)^{\!\!\frac{k}{p}} \leq \frac{C}{R} \left(\,\dashint_{B_{2R}} u^{\ell} \, dx \right)^{\!\!\frac{1}{\ell}}\!.
		\end{equation}
		We choose
		\begin{equation}
			\label{eq:def-ell}
			\ell \coloneqq \frac{p}{n-p} \, p_\sharp = \frac{n(p-1)}{n-p} > p-1,
		\end{equation}
		so that
		\begin{equation*}
			\gamma \coloneqq \frac{p}{k} < \frac{\ell \, p}{1+\ell} = \frac{n}{n-1} (p-1).
		\end{equation*}
		Applying~\eqref{eq:bv-poho-grad}, we deduce that
		\begin{equation}
			\label{eq:grad-ugamma}
			\left(\int_{B_{R}} \,\abs*{\nabla u}^{\gamma} \, dx \right)^{\!\!\frac{1}{\gamma}} \leq C R^{\frac{n}{\gamma}-\frac{n}{\ell}-1} \left(\int_{B_{2R}} u^{\ell} \, dx \right)^{\!\!\frac{1}{\ell}} = R^{\frac{n}{\gamma}-\frac{n-1}{p-1}} \left(\int_{B_{2R}} v^{-p_\sharp} \, dx \right)^{\!\!\frac{1}{\ell}}\!,
		\end{equation}
		where the last identity follows form~\eqref{eq:defv-class} and~\eqref{eq:def-ell}. On the other hand, using Proposition~\ref{prop:int-uv-2}, we infer that
		\begin{equation*}
			\left(\int_{B_{2R}} v^{-p_\sharp} \, dx \right)^{\!\!\frac{1}{\ell}} \leq C R^{\frac{n-p}{n p_\sharp} \varepsilon}
		\end{equation*}
		for every~$\varepsilon \in (0,\varepsilon_0)$. This, together with~\eqref{eq:grad-ugamma}, yields
		\begin{equation}
			\label{eq:grad-rewrite}
			\int_{B_{R}} \,\abs*{\nabla u}^{\gamma} \, dx \leq C R^{n-\frac{n-1}{p-1} \gamma + \varepsilon},
		\end{equation}
		for every~$\varepsilon \in (0,\varepsilon_0)$, up to a redefinition of~$\varepsilon_0>0$.
		
		Since from~\eqref{eq:defv-class} we get that
		\begin{equation}
		\label{eq:gradu-gradv}
			\nabla u = - \frac{n-p}{p} v^{-\frac{n}{p}} \,\nabla v \quad\text{in } \R^n,
		\end{equation}
		estimate~\eqref{eq:grad-rewrite} is equivalent to
		\begin{equation}
			\label{eq:gradv-gamma}
			\int_{B_{R}} v^{-\frac{n}{p} \gamma} \,\abs*{\nabla v}^{\gamma} \, dx \leq C R^{n-\frac{n-1}{p-1} \gamma +  \varepsilon} \quad\text{for every } \gamma \in \left(0,\frac{n}{n-1} (p-1)\right)\!.
		\end{equation}
		Therefore, for 
		\begin{equation*}
			r \in \left(0,\frac{n}{n-1} (p-1)\right) \quad\text{and}\quad q \leq \frac{n}{p} \, r,
		\end{equation*}
		using~\eqref{eq:babv-serzou} and~\eqref{eq:gradv-gamma}, we deduce that
		\begin{equation*}
			\begin{split}
				\int_{B_{R}} v^{-q} \,\abs*{\nabla v}^{r} \, dx &= \int_{B_{R}} v^{\frac{n}{p} r-q} v^{-\frac{n}{p} r} \,\abs*{\nabla v}^{r} \, dx \leq C R^{\frac{p}{p-1}\left(\frac{n}{p} r-q \right)} \int_{B_{R}} v^{-\frac{n}{p} r} \,\abs*{\nabla v}^{r} \, dx \\
				&\leq C R^{n-\frac{n-1}{p-1} r + \frac{p}{p-1}\left(\frac{n}{p} r-q \right) + \varepsilon} = C R^{n-\frac{pq-r}{p-1} + \varepsilon}.
			\end{split}
		\end{equation*}
		
		We now consider the case
		\begin{equation}
			\label{eq:rest-r-q}
			r \in \left(0,\frac{n}{n-1} (p-1)\right) \quad\text{and}\quad q > \frac{n}{p} \, r,
		\end{equation}
		and define, for~$\varepsilon \in (0,1)$,
		\begin{equation}
			\label{eq:def-theta}
			\theta \coloneqq \frac{n}{n-1} \frac{p-1}{r} - \varepsilon.
		\end{equation}

		Clearly, from~\eqref{eq:rest-r-q}, there exists~$\varepsilon_1 >0$, depending only on~$n$,~$p$, and~$r$, such that~$\theta>1$ for all~$\varepsilon \in (0,\varepsilon_1)$. Applying H\"older inequality, we deduce
		\begin{equation}
			\label{eq:int-grad-toest}
			\begin{split}
				\int_{B_{R}} v^{-q} \,\abs*{\nabla v}^{r} \, dx &= \int_{B_{R}} v^{-\frac{n}{p} r} \,\abs*{\nabla v}^{r} v^{\frac{n}{p} r-q} \, dx \\
				&\leq \left(\int_{B_{R}} v^{-\frac{n}{p} r\theta} \,\abs*{\nabla v}^{r\theta} \, dx\right)^{\!\!\frac{1}{\theta}} \left(\int_{B_{R}} v^{-\left(q-\frac{n}{p} r\right) \theta'} dx\right)^{\!\!\frac{1}{\theta'}}\!.
			\end{split}
		\end{equation}
		Thanks to the definition of~$\theta$ in~\eqref{eq:def-theta}, estimate~\eqref{eq:gradv-gamma} immediately yields that
		\begin{equation}
			\label{eq:int-gradtoest-1}
			\int_{B_{R}} v^{-\frac{n}{p} r\theta} \,\abs*{\nabla v}^{r\theta} \, dx \leq C R^{n-\frac{n-1}{p-1} r\theta + \varepsilon}.
		\end{equation}
		To estimate the second integral on the right-hand side of~\eqref{eq:int-grad-toest}, we further distinguish between two cases based on the value of~$q$. First, we consider the case
		\begin{equation*}
			\frac{n}{p} \, r < q < \frac{np-n+r}{p}.
		\end{equation*}
		Observe that this condition is consistent in light of the restriction on~$r$ in~\eqref{eq:rest-r-q}. Possibly taking a smaller~$\varepsilon_1>0$, depending on~$q$ as well, we deduce that
		\begin{equation*}
			\left(q-\frac{n}{p} \, r\right) \theta' < p_\sharp,
		\end{equation*}
		for every~$\varepsilon \in (0,\varepsilon_1)$. From this, Proposition~\ref{prop:int-uv-1} readily implies that
		\begin{equation}
			\label{eq:int-gradtoest-2}
			\int_{B_{R}} v^{-\left(q-\frac{n}{p} r\right) \theta'} dx \leq C R^{n-\frac{p}{p-1} \left(q-\frac{n}{p} r\right) \theta'}.
		\end{equation}
		As a result, combining~\eqref{eq:int-grad-toest}--\eqref{eq:int-gradtoest-2}, we conclude that
		\begin{equation*}
			\int_{B_{R}} v^{-q} \,\abs*{\nabla v}^{r} \, dx \leq C R^{n-\frac{n-1}{p-1} r - \frac{p}{p-1}\left(q-\frac{n}{p} r\right) + r \varepsilon} = C R^{n-\frac{pq-r}{p-1} + \varepsilon},
		\end{equation*}
		up to a redefinition of~$\varepsilon_1>0$. This completes the proof of~\eqref{eq:int-tbp-1}.
		
		Next, we consider the case
		\begin{equation*}
			\frac{np-n+r}{p} \leq q < \frac{np-n+p}{p} - \frac{n-p}{np\left(p-1\right)} r.
		\end{equation*}
		Note that this condition is consistent provided that~$r<p$ and this is verified since~$p<n$ and~\eqref{eq:rest-r-q} is in force. Moreover, it implies that
		\begin{equation*}
			p_\sharp \leq \left(q-\frac{n}{p} \, r\right) \theta' < p_\sharp +1 ,
		\end{equation*}
		for every~$\varepsilon \in (0,\varepsilon_1)$, possibly taking a smaller~$\varepsilon_1>0$, for~$q>q_r$. Hence, Proposition~\ref{prop:int-uv-2} yields that
		\begin{equation}
			\label{eq:int-gradtoest-22}
			\int_{B_{R}} v^{-\left(q-\frac{n}{p} r\right) \theta'} dx \leq C R^{\varepsilon},
		\end{equation}
		for all~$\varepsilon \in (0,\varepsilon_1)$, possibly for a smaller~$\varepsilon_1>0$. Therefore, combining~\eqref{eq:int-grad-toest},~\eqref{eq:int-gradtoest-1}, and~\eqref{eq:int-gradtoest-22}, we conclude that
		\begin{equation*}
			\int_{B_{R}} v^{-q} \,\abs*{\nabla v}^{r} \, dx \leq C R^{\left(n-\frac{n-1}{p-1} r\theta + \varepsilon \right) \frac{1}{\theta} + \frac{\varepsilon}{\theta'}} \leq C R^{\varepsilon},
		\end{equation*}
		where we used the definition of~$\theta$ in~\eqref{eq:def-theta}. 
		
		For~$q=q_r$, using~\eqref{eq:babv-serzou}, we have
		\begin{equation*}
			\int_{B_{R}} v^{-q_r} \,\abs*{\nabla v}^{r} \, dx = \int_{B_{R}} v^\varepsilon v^{-q_r-\varepsilon} \,\abs*{\nabla v}^{r} \, dx \leq C R^{\frac{p}{p-1}\varepsilon} \int_{B_{R}} v^{-q_r-\varepsilon} \,\abs*{\nabla v}^{r} \, dx \leq C R^{\varepsilon},
		\end{equation*}
		possibly for a smaller~$\varepsilon_1>0$. This shows the validity of~\eqref{eq:int-tbp-2}.
		
		Finally, estimate~\eqref{eq:est-grad-p} follows directly from~\eqref{eq:1-int-toprove}.
	\end{proof}
	
	
	\section{Proof of the classification results}
	\label{sec:proof-12-13}
	
	In this section, we prove the classification results in Theorems~\ref{th:class-bounded},~\ref{th:class-Lq}, and~\ref{th:class-crescita}.
	
	\begin{proof}[Proof of Theorems~\ref{th:class-bounded} and~\ref{th:class-Lq}]
		In the following calculations~$C>0$ will denote a constant independent of~$R$, which may vary from line to line.
		
		Since, by assumption and Proposition~\ref{prop:int-uv-2},~$u \in L^{\past-1}(\R^n) \cap L^q(\R^n)$ for some~$q \in [\past,+\infty]$, interpolating, we deduce that~$u \in L^{\past}\!(\R^n)$. Consequently, a standard test function argument yields that~$u \in \mathcal{D}^{1,p}(\R^n)$. Indeed, let~$\eta \in C^\infty_c(\R^n)$ be a cut-off function as in the proof of Proposition~\ref{prop:int-uv-2} and test~\eqref{eq:critica-plap} with~$\psi = u \eta^p$. Using Young's inequality, we obtain
		\begin{equation*}
			\int_{\R^n} \eta^p  \,\abs*{\nabla u}^p \, dx \leq \int_{\R^n} \eta^p u^{\past} dx + C \int_{A_{R,2R}} u^{p} \,\abs*{\nabla\eta}^p \, dx.
		\end{equation*}
		Applying H\"older inequality then gives
		\begin{equation*}
			\int_{B_R} \,\abs*{\nabla u}^p \, dx \leq \int_{\R^n} \eta^p u^{\past} dx + C \,\frac{\abs*{B_{2R}}^{\frac{1}{p}-\frac{1}{\past}}}{R^p} \left(\int_{A_{R,2R}} u^{\past}  dx\right)^{\!\!\frac{p}{\past}} \leq C.
		\end{equation*}
		Therefore, letting~$R \to +\infty$, we deduce that~$\abs*{\nabla u} \in L^p(\R^n)$, and hence~$u \in \mathcal{D}^{1,p}(\R^n)$. The desired conclusion then follows from the classification in~\cite{dm,sciu,vet}.
		
		We now present an alternative proof based solely on the~$P$-function approach, which in particular does not rely on any~$L^\infty$-estimate.
		
		Setting
		\begin{equation*}
			\mathring{W} \coloneqq \nabla\stressu - \frac{\tr\nabla\stressu}{n} \Id,
		\end{equation*}
		in Proposition 2.3 of~\cite{ou}, Ou proved that the~$P$-function defined in~\eqref{eq:defPfunct-class} satisfies
		\begin{multline*}
			\int_{\R^n} v^{1-n} P^m \tr \!\mathring{W}^2 \eta \, dx + nm \int_{\R^n} v^{-n} P^{m-1} \,\abs{\nabla v}^{p-2} \left\langle \mathring{W}^2 \nabla v, \nabla v \right\rangle \eta \, dx \\
			\leq - \int_{\R^n} v^{1-n} P^{m} \,\abs{\nabla v}^{p-2} \left\langle \mathring{W} \nabla v, \nabla\eta \right\rangle dx
		\end{multline*}
		for every non-negative~$\eta \in C^\infty_c(\R^n)$ and~$m \in \R$ -- see also~\cite[Lemma~2.2]{vet-plap}. From this, arguing as in~\cite{ou} -- see also~\cite[Formula~(2.17)]{vet-plap} --, one can see that in order to obtain the classification, it suffices to prove, for~$\varepsilon \in (0,1)$ sufficiently small, that
		\begin{equation}
			\label{eq:int-o(1)-tbp-2}
			\int_{B_R} v^{1-n} P^{-\frac{p-1}{p}+\varepsilon} \,\abs*{\nabla v}^{2(p-1)} \, dx = o(R^2) \quad\text{as } R \to +\infty.
		\end{equation}
		Using the definition of~$P$ in~\eqref{eq:defPfunct-class}, we easily see that
		\begin{equation}
			\label{eq:est-int-o(1)}
			\int_{B_R} v^{1-n} P^{-\frac{p-1}{p}+\varepsilon} \,\abs*{\nabla v}^{2(p-1)} \, dx \leq C \int_{B_R} v^{2-\frac{1}{p}-n-\varepsilon} \,\abs*{\nabla v}^{p - 1 + p \varepsilon} \, dx.
		\end{equation}
		Moreover, we may assume that
		\begin{equation*}
			1<p\leq \frac{n+1}{3},
		\end{equation*}
		since otherwise the conclusion follows directly from~\cite{ou}. This also implies that
		\begin{equation}
			\label{eq:rest-q-class-ener}
			p_\sharp+1 \leq n-2+\frac{1}{p}+\varepsilon < n
		\end{equation}
		for~$\varepsilon \in (0,1)$ sufficiently small.
		
		Next, we observe that
		\begin{equation}
			\label{eq:u-D1p-classif}
			u \in \mathcal{D}^{1,p}(\R^n) \quad\text{if and only if}\quad \int_{\R^n} v^{-n} \, dx + \int_{\R^n} v^{-n} \,\abs*{\nabla v}^p \, dx < +\infty.
		\end{equation}
		As a consequence, interpolating~\eqref{eq:int-v-q-psharp-1} with~\eqref{eq:u-D1p-classif}, we obtain
		\begin{equation}
			\label{eq:int-v-q-very-lagre}
			\int_{B_R} v^{-q} \, dx \leq C \quad\text{for every } q \in [p_\sharp+1,n].
		\end{equation}
		Similarly, interpolating~\eqref{eq:est-grad-p} and~\eqref{eq:u-D1p-classif}, we deduce
		\begin{equation}
			\label{eq:est-grad-p-very-large}
			\int_{B_R} v^{-q} \,\abs*{\nabla v}^p \, dx \leq C R^{\varepsilon} \quad\text{for every } q \in [p_\sharp+1,n),
		\end{equation}
		possibly after redefining~$\varepsilon>0$. Furthermore, for every~$q \in [p_\sharp+1,n)$, choosing~$\lambda \in (0,1)$ such that~$p-1+p\varepsilon = \lambda p$ and interpolating, we obtain
		\begin{equation}
			\label{eq:interp-tocomb}
			\int_{B_R} v^{-q} \,\abs*{\nabla v}^{p-1+p\varepsilon} \, dx \leq \left(\int_{B_R} v^{-q} \,\abs*{\nabla v}^{p} \, dx\right)^{\!\!\lambda} \left(\int_{B_R} v^{-q} \, dx\right)^{\!\!1-\lambda}\!.
		\end{equation}
		Finally, combing~\eqref{eq:int-v-q-very-lagre}--\eqref{eq:interp-tocomb}, we find that
		\begin{equation*}
			\label{eq:interp-combined}
			\int_{B_R} v^{-q} \,\abs*{\nabla v}^{p-1+p\varepsilon} \, dx \leq C R^{\varepsilon}  \quad\text{for every } q \in [p_\sharp+1,n),
		\end{equation*}
		for all sufficiently large~$R \geq 1$, and provided~$\varepsilon \in (0,1)$ is small enough. This estimate, together with~\eqref{eq:est-int-o(1)} and~\eqref{eq:rest-q-class-ener}, ensures the validity of~\eqref{eq:int-o(1)-tbp-2}.
	\end{proof}
	
	\begin{proof}[Proof of Corollary~\ref{cor:absolut-cont}]
		Since, by Proposition~\ref{prop:int-uv-2},~$u^{\past-1} \in L^{1}(\R^n)$ and it is uniformly continuous, it follows that~$u^{p^{\ast}-1}$ vanishes at infinity. Consequently,~$u \in L^{\infty}(\R^n)$. We are now in a position to apply Theorem~\ref{th:class-bounded}.
	\end{proof}
	
	We conclude this section proving Theorem~\ref{th:class-crescita}.
	
	\begin{proof}[Proof of Theorem~\ref{th:class-crescita}]
		
		In the following calculations~$C>0$ will denote a constant independent of~$R$, which may vary from line to line.
		
		As in the second part of the proof of Theorems~\ref{th:class-bounded} and~\ref{th:class-Lq} -- see, in particular,~\eqref{eq:int-o(1)-tbp-2} --, in order to obtain the classification, it suffices to prove that, for~$\varepsilon \in (0,1)$ sufficiently small, we have
		\begin{equation}
			\label{eq:int-o(1)-tbp}
			\int_{B_R} v^{1-n} P^{-\frac{p-1}{p}+\varepsilon} \,\abs*{\nabla v}^{2(p-1)} \, dx = o(R^2) \quad\text{as } R \to +\infty.
		\end{equation}
		We observe that the growth condition~\eqref{eq:growth-beta} implies
		\begin{equation}
			\label{eq:growth-2}
			v(x) \geq C \,\abs*{x}^{-\frac{\beta p}{n-p}} \quad\text{for every } \abs*{x} \geq 1,	
		\end{equation}
		moreover, we set
		\begin{equation*}
			\mathtt{q} \coloneqq p_\sharp+1 - \frac{n-p}{np} - \frac{p(n-1)}{n(p-1)} \,\varepsilon
		\end{equation*}
		and choose~$\varepsilon \in (0,1)$ so small such that
		\begin{equation*}
			p-1+p \varepsilon < \frac{n}{n-1} (p-1) \quad\text{and}\quad \mathtt{q} > q_{p-1+p \varepsilon}.
		\end{equation*}
		Using~\eqref{eq:defPfunct-class}, we obtain
		\begin{equation}
			\label{eq:est-o(1)-1}
			\begin{split}
				\int_{B_R} v^{1-n} P^{-\frac{p-1}{p}+\varepsilon} \,\abs*{\nabla v}^{2(p-1)} \, dx &\leq C \int_{B_R} v^{1-n+\frac{p-1}{p}-\varepsilon} \,\abs*{\nabla v}^{p-1+p \varepsilon} \, dx \\
				&= C \int_{B_R} v^{1-n+\frac{p-1}{p}-\varepsilon+\mathtt{q}} \, v^{-\mathtt{q}} \,\abs*{\nabla v}^{p-1+p \varepsilon} \, dx.
			\end{split}
		\end{equation}
		Since, thanks to~\eqref{eq:cond-p-beta}, it follows that
			\begin{equation*}
				1-n+\frac{p-1}{p}-\varepsilon+\mathtt{q} = \frac{(3n+1)p-(n^2+2n)}{np} - \frac{n(p-1)+p(n-1)}{n(p-1)} \, \varepsilon <0,
			\end{equation*}
		Proposition~\ref{prop:est-sharp-grad},~\eqref{eq:growth-2}, and~\eqref{eq:est-o(1)-1} yield
			\begin{equation*}
				\int_{B_R} v^{1-n} P^{-\frac{p-1}{p}+\varepsilon} \,\abs*{\nabla v}^{2(p-1)} \, dx \leq C R^{\frac{n^2+2n-(3n+1)p}{n(n-p)} \, \beta + c_{n,p} \, \beta \varepsilon},
			\end{equation*}
		for some~$c_{n,p}>0$ and for~$\varepsilon \in (0,1)$ sufficiently small. From this, thanks to~\eqref{eq:cond-p-beta}, we infer the validity of~\eqref{eq:int-o(1)-tbp} and the proof is complete.
	\end{proof}

	
	\appendix
	
	\section{Further integral estimates involving the gradient}
	\label{sec:furt-int-est}
 
	In this appendix, we present new estimates for the integral involving~$\abs{\nabla u}$ which are not used in the proofs of the main results of this paper. However, they generalize and improve some related estimates already available in literature and may be useful for further extending the classification result of Theorem~\ref{th:class-noener}. 

	Specifically, in the following result we enlarge the range of exponents for which we have a nearly sharp integral estimate for~$v^{-q} \,\abs*{\nabla v}^{r}$. 	

	\begin{proposition}
	\label{prop:grad-second}
		Let~$n \in \N$,~$1<p<n$, and let~$u \in W^{1,p}_{\loc}(\R^n) \cap L^\infty_{\loc}(\R^n)$ be a non-negative, non-trivial, local weak solution to~\eqref{eq:critica-plap} satisfying~\eqref{eq:ass-inf}.
		Suppose that~$r \in (0,p)$ and let~$q_r$ be defined in~\eqref{eq:def-qr}.
		Then, there exists~$\varepsilon_2 \in (0,\varepsilon_1)$, depending on~$n$,~$p$,~$r$, and~$q$, such that for every~$R \geq R_1$ -- where~$R_1$ is defined in Proposition~\ref{prop:est-sharp-grad} -- and~$\varepsilon \in (0,\varepsilon_2)$, we have
		\begin{equation}
			\label{eq:int-grad-all-r}
			\int_{B_{R}} v^{-q} \,\abs*{\nabla v}^{r} \, dx \leq C R^{n-\frac{pq-r}{p-1} + \varepsilon} \quad\text{for every } q \in \left(-\infty,q_r \right) \!,
		\end{equation}
		where~$C>0$ is a constant independent of~$R$.
	\end{proposition}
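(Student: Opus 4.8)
The idea is to obtain the full range $r\in(0,p)$ by a single Hölder interpolation between the two endpoint estimates already available: at $r=p$ the sharp gradient bound \eqref{eq:est-grad-p}, namely $\int_{B_R}v^{-q}\abs*{\nabla v}^p\,dx\leq CR^{\,n-\frac{p}{p-1}(q-1)}$ for $q<p_\sharp+1$, and at $r=0$ the sharp bound of Proposition~\ref{prop:int-uv-1}, namely $\int_{B_R}v^{-q}\,dx\leq CR^{\,n-\frac{p}{p-1}q}$ for $q<p_\sharp$. Since $p<n$ gives $\frac{n}{n-1}(p-1)<p$, this genuinely enlarges the range of $r$ covered by \eqref{eq:int-tbp-1}; for $r<\frac{n}{n-1}(p-1)$ the claim is exactly \eqref{eq:int-tbp-1}, so the substance is the complementary range $r\in[\frac{n}{n-1}(p-1),p)$, which the argument below also covers without extra effort.

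First I would fix $r\in(0,p)$, set $\lambda\coloneqq r/p\in(0,1)$, and fix $q<q_r$. The key algebraic observation is that, recalling $p_\sharp=\frac{p-1}{p}n$,
\[
q_r=\frac{np-n+r}{p}=p_\sharp+\frac{r}{p}=\lambda\,(p_\sharp+1)+(1-\lambda)\,p_\sharp,
\]
i.e. $q_r$ is precisely the $\lambda$-weighted average of the two endpoint exponents $p_\sharp+1$ (admissible in \eqref{eq:est-grad-p}) and $p_\sharp$ (admissible in Proposition~\ref{prop:int-uv-1}). Hence, since $q<q_r$, I can choose $q_1<p_\sharp+1$ and $q_2<p_\sharp$ with $\lambda q_1+(1-\lambda)q_2=q$ (for instance $q_1=p_\sharp+1-\delta$ and $q_2=(q-\lambda q_1)/(1-\lambda)$ with $\delta>0$ small enough that $q_2<p_\sharp$, which is possible exactly because $q<q_r$; this works for negative $q$ as well, taking both $q_1,q_2$ suitably negative). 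Then Hölder's inequality with exponents $1/\lambda$ and $1/(1-\lambda)$ gives
\[
\int_{B_R}v^{-q}\abs*{\nabla v}^{r}\,dx=\int_{B_R}\bigl(v^{-q_1}\abs*{\nabla v}^{p}\bigr)^{\lambda}\bigl(v^{-q_2}\bigr)^{1-\lambda}\,dx\leq\Bigl(\int_{B_R}v^{-q_1}\abs*{\nabla v}^{p}\,dx\Bigr)^{\!\lambda}\Bigl(\int_{B_R}v^{-q_2}\,dx\Bigr)^{\!1-\lambda}\!.
\]

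Next I would bound the first factor by \eqref{eq:est-grad-p} and the second by Proposition~\ref{prop:int-uv-1} (both valid for $R\geq R_0$), obtaining a bound of the form $CR^{E}$ with $E=\lambda\bigl(n-\tfrac{p}{p-1}(q_1-1)\bigr)+(1-\lambda)\bigl(n-\tfrac{p}{p-1}q_2\bigr)$. Using $\lambda q_1+(1-\lambda)q_2=q$ and $\lambda=r/p$, a one-line computation collapses this to $E=n-\tfrac{p}{p-1}\bigl(q-\tfrac{r}{p}\bigr)=n-\tfrac{pq-r}{p-1}$, which is exactly the exponent in \eqref{eq:int-grad-all-r}. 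Thus the estimate holds already with $\varepsilon=0$, and a fortiori for every $\varepsilon\in(0,\varepsilon_1)$; in particular one may simply take $\varepsilon_2=\varepsilon_1$ and $R_1$ as in Proposition~\ref{prop:est-sharp-grad}.

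I do not expect a genuine obstacle here: the whole point is the identification $q_r=\lambda(p_\sharp+1)+(1-\lambda)p_\sharp$, which makes the interpolation close exactly and explains why $q_r$ is the optimal threshold (cf. Remark~\ref{rem:optim-q}). The only points needing a little care are the feasibility of the splitting $q=\lambda q_1+(1-\lambda)q_2$, which degenerates precisely as $q\uparrow q_r$ — consistently with the constant $C$ and the admissible $\varepsilon$ depending on $q$ — and the bookkeeping that both input estimates require $R\geq R_0$. As an alternative that keeps the harmless loss $R^{\varepsilon}$ present in the statement, one could instead interpolate \eqref{eq:int-tbp-1} (at an auxiliary exponent $r_0<\frac{n}{n-1}(p-1)$) against \eqref{eq:est-grad-p} in the same manner, since $q_{r_0}$ and $p_\sharp+1$ again average to $q_r$.
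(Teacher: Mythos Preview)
Your proof is correct and follows the same interpolation strategy as the paper---split $v^{-q}\abs*{\nabla v}^{r}$ via H\"older and control each factor with an already-known estimate---but you choose cleaner endpoints. The paper interpolates between \eqref{eq:int-tbp-1} at an exponent $\mathtt{r}-\varepsilon<\frac{n}{n-1}(p-1)$ and \eqref{eq:est-grad-p} at $r=p$; since \eqref{eq:int-tbp-1} already carries an $\varepsilon$-loss, so does the conclusion. You instead interpolate directly between Proposition~\ref{prop:int-uv-1} at $r=0$ and \eqref{eq:est-grad-p} at $r=p$, both of which are sharp, and the algebraic identity $q_r=\lambda(p_\sharp+1)+(1-\lambda)p_\sharp$ with $\lambda=r/p$ makes the exponents close exactly. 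This yields \eqref{eq:int-grad-all-r} with $\varepsilon=0$, so your argument is actually a mild improvement over the paper's: it handles all $r\in(0,p)$ in one stroke, avoids the case distinction at $\frac{n}{n-1}(p-1)$, and removes the $R^{\varepsilon}$ loss. The ``alternative'' you mention at the end is essentially the route the paper takes.
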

	\begin{proof} 
		In the following calculations~$C>0$ will denote a constant independent of~$R$, which may vary from line to line.
		
		We set
		\begin{equation*}
			\mathtt{r} \coloneqq \frac{n}{n-1} (p-1),
		\end{equation*}
		so that, thanks to~\eqref{eq:int-tbp-1}, estimate~\eqref{eq:int-grad-all-r} holds for every~$r \in (0,\mathtt{r})$.
		
		Thus, we may assume that~$r \in [\mathtt{r},p)$ and choose~$\lambda \in (0,1)$ and~$\varepsilon \in \left(0,\min\{\varepsilon_1,\mathtt{r}\}\right)$ -- where~$\varepsilon_1$ is defined in Proposition~\ref{prop:est-sharp-grad} -- such that~$r = \lambda (\mathtt{r}-\varepsilon) + (1-\lambda) p$. Coherently, we write~$q = \lambda q_{\mathtt{r}-\varepsilon} + (q-\lambda q_{\mathtt{r}-\varepsilon})$, recalling~\eqref{eq:def-qr}. Therefore, using H\"older inequality, we get
		\begin{equation}
		\label{eq:est-grad-gener}
			\int_{B_{R}} v^{-q} \,\abs*{\nabla v}^{r} \, dx \leq \left(\int_{B_{R}} v^{-q_{\mathtt{r}-\varepsilon}} \,\abs*{\nabla v}^{\mathtt{r}-\varepsilon} \, dx\right)^{\!\!\lambda} \left(\int_{B_{R}} v^{-\frac{q-\lambda q_{\mathtt{r}-\varepsilon}}{1-\lambda}} \,\abs*{\nabla v}^{p} \, dx\right)^{\!\! 1-\lambda} \!.
		\end{equation}
		We now estimate both terms on the right-hand side of~\eqref{eq:est-grad-gener}. For the first integral, Proposition~\ref{prop:est-sharp-grad} yields
		\begin{equation}
		\label{eq:est-grad-gener-1}
			\int_{B_{R}} v^{-q_{\mathtt{r}-\varepsilon}} \,\abs*{\nabla v}^{\mathtt{r}-\varepsilon} \, dx \leq C R^{n-\frac{pq_{\mathtt{r}-\varepsilon}-\mathtt{r}+\varepsilon}{p-1} + \varepsilon},
		\end{equation}
		for every~$\varepsilon \in (0,\varepsilon_2)$. Moreover, since~$q<q_r$, it follows that
		\begin{equation*}
			\frac{q-\lambda q_{\mathtt{r}-\varepsilon}}{1-\lambda} < p_\sharp +1,
		\end{equation*}
		therefore, from~\eqref{eq:est-grad-p}, we deduce that
		\begin{equation}
		\label{eq:est-grad-gener-2}
			\int_{B_{R}} v^{-\frac{q-\lambda q_{\mathtt{r}-\varepsilon}}{1-\lambda}} \,\abs*{\nabla v}^{p} \, dx \leq C R^{n-\frac{p}{p-1}\left(\frac{q-\lambda q_{\mathtt{r}-\varepsilon}}{1-\lambda}-1\right)}.
		\end{equation}
		Combining~\eqref{eq:est-grad-gener}--\eqref{eq:est-grad-gener-2} and exploiting the previously defined splitting of~$r$ to simplify the calculations, we conclude that
		\begin{equation*}
			\int_{B_{R}} v^{-q} \,\abs*{\nabla v}^{r} \, dx \leq \left(R^{n-\frac{pq_{\mathtt{r}-\varepsilon}-\mathtt{r}+\varepsilon}{p-1} + \varepsilon}\right)^{\!\!\lambda} \left(R^{n-\frac{p}{p-1}\left(\frac{q-\lambda q_{\mathtt{r}-\varepsilon}}{1-\lambda}-1\right)}\right)^{\!\! 1-\lambda} = C R^{n-\frac{pq-r}{p-1} + \varepsilon} ,
		\end{equation*}
 		up to a redefinition of~$\varepsilon>0$ and~$\varepsilon_2>0$. This concludes the proof of~\eqref{eq:int-grad-all-r}.
	\end{proof}
	
	Propositions~\ref{prop:est-sharp-vet},~\ref{prop:est-sharp-grad}, and~\ref{prop:grad-second} give a quite complete picture of the sharp or nearly sharp integral estimates for~$v^{-q} \,\abs{\nabla v}^r$. Apparently, only the case~$q \geq q_r$ is missing.
	
	In this direction, we obtain the following result, which is inspired by Lemmas~2.1,~2.2, and~2.3 in~\cite{bgv}.
	
	\begin{proposition}
	\label{prop:grad-third}
		Let~$n \in \N$,~$1<p<n$, and let~$u \in W^{1,p}_{\loc}(\R^n) \cap L^\infty_{\loc}(\R^n)$ be a non-negative, non-trivial, local weak solution to~\eqref{eq:critica-plap} satisfying~\eqref{eq:ass-inf}. Suppose that
		\begin{equation}
		\label{eq:ass-p-r-q}
			r \in \left[\frac{n}{n-1} (p-1),p-1+\frac{p}{n}\right) \quad\text{and}\quad q \geq r,
		\end{equation}
		and set
		\begin{equation*}
			\mathsf{q}_r \coloneqq \frac{n}{p} \, r > r.
		\end{equation*}
		Then, there exists~$\varepsilon_3 \in (0,\varepsilon_0)$, depending on~$n$,~$p$,~$r$, and~$q$, such that for every~$R \geq R_1$ -- where~$R_1$ is defined in Proposition~\ref{prop:est-sharp-grad} -- and~$\varepsilon \in (0,\varepsilon_3)$, we have	
		\begin{gather}
			\label{eq:int-tbp-1'}
			\int_{B_{R}} v^{-q} \,\abs*{\nabla v}^{r} \, dx \leq C R^{\frac{p}{p-1}\left(\frac{n}{p} r-q \right) + \varepsilon} \quad\text{for every } q \in \left[r,\mathsf{q}_r\right] \!, \\
			\label{eq:int-tbp-2'}
			\int_{B_{R}} v^{-q} \,\abs*{\nabla v}^{r} \, dx \leq C R^{\varepsilon} \quad\text{for every } q \in \left(\mathsf{q}_r, p_\sharp+1 \right) \!,
		\end{gather}
		where~$C>0$ is a constant independent of~$R$.
	\end{proposition}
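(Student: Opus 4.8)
The plan is to reduce both~\eqref{eq:int-tbp-1'} and~\eqref{eq:int-tbp-2'} to the single estimate
\begin{equation}
\label{eq:grad-third-key}
	\int_{B_R} v^{-q} \,\abs*{\nabla v}^{r} \, dx \leq C R^{\varepsilon} \qquad \text{for every } q \in \left[ q_r, p_\sharp+1 \right),
\end{equation}
valid for~$R \geq R_1$ and~$\varepsilon$ small, with~$C>0$ independent of~$R$. First, the assumption~$r \geq \tfrac{n}{n-1}(p-1)$ in~\eqref{eq:ass-p-r-q} yields~$q_r \leq \mathsf{q}_r$, while~$r<p-1+\tfrac{p}{n}$ yields~$\mathsf{q}_r<p_\sharp+1$, so that~$(\mathsf{q}_r,p_\sharp+1) \subseteq [q_r,p_\sharp+1)$ and~\eqref{eq:int-tbp-2'} is an immediate consequence of~\eqref{eq:grad-third-key}. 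For~\eqref{eq:int-tbp-1'}, given~$q \in [r,\mathsf{q}_r]$ I would write~$v^{-q}\,\abs*{\nabla v}^{r} = v^{\mathsf{q}_r-q}\,v^{-\mathsf{q}_r}\,\abs*{\nabla v}^{r}$; since~$\mathsf{q}_r-q \geq 0$, estimate~\eqref{eq:babv-serzou} gives~$v^{\mathsf{q}_r-q} \leq C R^{\frac{p}{p-1}(\mathsf{q}_r-q)}$ on~$B_R \setminus B_1$, the integral over~$B_1$ is bounded by a constant (because~$u \in C^{1,\alpha}_{\loc}(\R^n)$ makes~$v^{-q}\abs*{\nabla v}^{r}$ bounded there), and applying~\eqref{eq:grad-third-key} with~$q=\mathsf{q}_r$ gives
\begin{equation*}
	\int_{B_R} v^{-q}\,\abs*{\nabla v}^{r} \, dx \leq C + C R^{\frac{p}{p-1}(\mathsf{q}_r-q)} \int_{B_R} v^{-\mathsf{q}_r}\,\abs*{\nabla v}^{r} \, dx \leq C R^{\frac{p}{p-1}(\mathsf{q}_r-q)+\varepsilon},
\end{equation*}
which is exactly~\eqref{eq:int-tbp-1'}, since~$\tfrac{p}{p-1}(\mathsf{q}_r-q)=\tfrac{p}{p-1}\!\left(\tfrac{n}{p}\,r-q\right)$.

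To prove~\eqref{eq:grad-third-key} I would interpolate, via H\"older's inequality, between two estimates already contained in Proposition~\ref{prop:est-sharp-grad}: estimate~\eqref{eq:int-tbp-2}, giving~$\int_{B_R} v^{-\tilde q_1}\abs*{\nabla v}^{\tilde r_1}\,dx \leq C R^{\varepsilon}$ whenever~$\tilde r_1 \in \bigl(0,\tfrac{n}{n-1}(p-1)\bigr)$ and~$\tilde q_1 \in \bigl[q_{\tilde r_1},\, p_\sharp+1-\tfrac{n-p}{np(p-1)}\tilde r_1\bigr)$, and estimate~\eqref{eq:est-grad-p}, giving~$\int_{B_R} v^{-\tilde q_2}\abs*{\nabla v}^{p}\,dx \leq C R^{\frac{p}{p-1}(p_\sharp+1-\tilde q_2)}$ for every~$\tilde q_2<p_\sharp+1$. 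For a parameter~$\mu \in (0,1)$ to be chosen, I would split, pointwise,
\begin{equation*}
	v^{-q}\,\abs*{\nabla v}^{r} = \left(v^{-\tilde q_1}\abs*{\nabla v}^{\tilde r_1}\right)^{\!\mu}\left(v^{-\tilde q_2}\abs*{\nabla v}^{p}\right)^{\!1-\mu},
\end{equation*}
where matching the powers of~$\abs*{\nabla v}$ forces~$\tilde r_1 = p-\tfrac{p-r}{\mu}$ and matching those of~$v$ forces~$\mu\tilde q_1+(1-\mu)\tilde q_2=q$, and then apply H\"older's inequality with exponents~$1/\mu$ and~$1/(1-\mu)$. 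Since~$r<p$ one has~$\tilde r_1<p$, and~$\tilde r_1 \in \bigl(0,\tfrac{n}{n-1}(p-1)\bigr)$ precisely when~$\mu$ belongs to the interval~$\bigl(\tfrac{p-r}{p},\,\tfrac{(n-1)(p-r)}{n-p}\bigr)$, which is nonempty because~$p>1$. Taking~$\tilde q_2$ close to~$p_\sharp+1$ makes the second factor contribute only~$CR^{(1-\mu)\frac{p}{p-1}(p_\sharp+1-\tilde q_2)}$, an arbitrarily small power of~$R$, and then~$\tilde q_1$ is close to~$\tilde q_1^{\ast}:=(p_\sharp+1)-\tfrac{(p_\sharp+1)-q}{\mu}$. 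A short computation shows that~$\tilde q_1^{\ast} \geq q_{\tilde r_1}$ is equivalent to~$q \geq q_r$, and that~$\tilde q_1^{\ast}<p_\sharp+1-\tfrac{n-p}{np(p-1)}\tilde r_1$ is equivalent to~$\mu<\tfrac{p-r}{p}+\tfrac{n(p-1)}{n-p}\bigl(p_\sharp+1-q\bigr)$; as~$q<p_\sharp+1$, the right-hand side here is strictly larger than~$\tfrac{p-r}{p}$, so~$\mu$ can still be chosen in the prescribed interval. Fixing such a~$\mu$, then taking~$\tilde q_2$ close enough to~$p_\sharp+1$ that~$\tilde q_1$ still lies (by continuity) in the open admissibility range of~\eqref{eq:int-tbp-2} and that~$(1-\mu)\tfrac{p}{p-1}(p_\sharp+1-\tilde q_2)<\varepsilon$, and bounding the first factor by~\eqref{eq:int-tbp-2} and the second by~\eqref{eq:est-grad-p}, one obtains~\eqref{eq:grad-third-key} after a harmless redefinition of~$\varepsilon$ and, if necessary, an enlargement of~$R_1$.

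The one genuinely delicate point is this bookkeeping: one must check that the constraints on~$\mu$,~$\tilde q_1$, and~$\tilde q_2$ can be met simultaneously for every admissible pair~$(r,q)$. As sketched above, after simplification each of these constraints reduces to the single inequality~$q<p_\sharp+1$, which holds throughout the relevant range, so no real obstacle arises; the rest is a routine application of H\"older's inequality together with the previously established estimates. Finally, one collects the threshold~$\varepsilon_1$ of Proposition~\ref{prop:est-sharp-grad} and the one supplied by the continuity argument into a single~$\varepsilon_3 \in (0,\varepsilon_0)$, depending only on~$n$,~$p$,~$r$, and~$q$.
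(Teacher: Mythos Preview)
Your argument is correct and takes a genuinely different route from the paper's. The paper proves Proposition~\ref{prop:grad-third} by first deriving a \emph{new} gradient estimate, namely $\int_{B_R}\abs*{\nabla u}^{\gamma}\,dx \leq CR^{\varepsilon}$ for every $\gamma \in \bigl[\tfrac{n}{n-1}(p-1),\,p-1+\tfrac{p}{n}\bigr)$, obtained by testing~\eqref{eq:critica-plap} with $\phi(u)\eta$ for a suitably chosen bounded monotone $\phi$ (in the spirit of Boccardo--Gallou\"et--V\'azquez) and invoking Proposition~\ref{prop:int-uv-2} and~\eqref{eq:grad-rewrite}; rewritten in the $v$-variable this reads $\int_{B_R} v^{-\mathsf{q}_r}\abs*{\nabla v}^{r}\,dx \leq CR^{\varepsilon}$, from which~\eqref{eq:int-tbp-1'} follows via~\eqref{eq:babv-serzou} and~\eqref{eq:int-tbp-2'} via H\"older together with Proposition~\ref{prop:int-uv-2} and a further interpolation. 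You instead bypass the test-function construction entirely, obtaining your key estimate directly by interpolating, with a well-chosen weight $\mu\in(0,1)$, between~\eqref{eq:int-tbp-2} (applied at some $\tilde r_1<\tfrac{n}{n-1}(p-1)$) and~\eqref{eq:est-grad-p} (at exponent $p$) of Proposition~\ref{prop:est-sharp-grad}. Your verification that the admissibility constraints on $\mu$, $\tilde q_1$, $\tilde q_2$ reduce to the single inequality $q<p_\sharp+1$ is correct, and your derivation of~\eqref{eq:int-tbp-1'} from the key estimate at $q=\mathsf{q}_r$ coincides with the paper's own reduction. Your route is more economical, relying only on previously established results; the paper's route has the advantage of isolating the stand-alone bound $\int_{B_R}\abs*{\nabla u}^{\gamma}\,dx \leq CR^{\varepsilon}$ for the extended range of $\gamma$, which is of independent interest.
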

	\begin{proof}
		In the following calculations~$C>0$ will denote a constant independent of~$R$, which may vary from line to line.
		
		Let~$m \in (0,1)$ be fixed and define~$\phi: [0+\infty) \to [0+\infty)$ by
		\begin{equation*}
			\phi(t) \coloneqq
				\begin{cases}
					\begin{aligned}
						& (1-m) \int_{0}^{t} s^{-m} \, ds		&& \text{if } t \in [0,1], \\
						& 1 + m  \int_{1}^{t} s^{-(1+m)} \, ds	&& \text{if } t \in (1,+\infty).
					\end{aligned}
				\end{cases}			
		\end{equation*}
		Then,~$\phi$ is piecewise smooth, except for a corner at~$t = 1$,~$\phi(0)=0$, and~$0 \leq \phi \leq 2$. Let~$\eta \in C^\infty_c(\R^n)$ be a cut-off function as in the proof of Proposition~\ref{prop:int-uv-2}. We test~\eqref{eq:critica-plap} with~$\psi = \phi(u) \eta$ and observe that
		\begin{equation*}
			\nabla \psi = \phi(u) \nabla\eta + \left((1-m) u^{-m} \chi_{\{u \leq 1\}} + m u^{-(1+m)} \chi_{\{u>1\}}\right) \eta \,\nabla u.
		\end{equation*}
		Hence, we deduce
		\begin{equation*}
			\int_{B_{2R}} \phi'(u) \eta \,\abs*{\nabla u}^p + \phi(u) \left\langle \stressu , \nabla \eta \right\rangle dx = \int_{B_{2R}} u^{\past-1} \phi(u) \eta \, dx,
		\end{equation*}
		from which
		\begin{equation*}
			\int_{B_R} (1-m) \frac{\abs*{\nabla u}^p}{u^m} \chi_{\{u \leq 1\}} + m \,\frac{\abs*{\nabla u}^p}{u^{1+m}} \chi_{\{u>1\}} \, dx \leq 2 \int_{B_{2R}} u^{\past-1} \, dx + \frac{2}{R} \int_{A_{R,2R}} \,\abs*{\nabla u}^{p-1} \, dx,
		\end{equation*}
		where we used the definition of~$\eta$. Recall here that, since~$u$ is non-trivial, we must have~$u>0$ in~$\R^n$. The latter, thanks to Proposition~\ref{prop:int-uv-2} and~\eqref{eq:grad-rewrite}, entails  
		\begin{equation}
		\label{eq:gradu-su-u}
			\int_{B_R} (1-m) \frac{\abs*{\nabla u}^p}{u^m} \chi_{\{u \leq 1\}} + m \,\frac{\abs*{\nabla u}^p}{u^{1+m}} \chi_{\{u>1\}} \, dx \leq C R^{\varepsilon},
		\end{equation}
		for every~$\varepsilon \in (0,\varepsilon_0)$.
		
		Fix~$0 \leq \mu \leq m \leq M$ and observe that~$u^\mu \geq u^m$ in~$\{0<u \leq 1\}$ and~$u^{1+m} \leq u^{1+M}$ in~$\{u>1\}$. Thus, from~\eqref{eq:gradu-su-u}, we obtain
		\begin{equation}
		\label{eq:gradu-su-u-2}
			\int_{B_R} \frac{\abs*{\nabla u}^p}{u^\mu} \chi_{\{u \leq 1\}} + \frac{\abs*{\nabla u}^p}{u^{1+M}} \chi_{\{u>1\}} \, dx \leq C R^{\varepsilon},
		\end{equation}
		for every~$\varepsilon \in (0,\varepsilon_0)$.
		
		Assume that
		\begin{equation}
		\label{eq:p-gamma-grad}
			\gamma \in \left[\frac{n}{n-1} (p-1),p-1+\frac{p}{n}\right)\!.
		\end{equation}
		Applying H\"older inequality, for~$s>1$, it follows that
		\begin{equation}
		\label{eq:grad-comb-1}
			\int_{B_R} \,\abs*{\nabla u}^\gamma \, dx \leq \left(\int_{B_R} \,\frac{\abs*{\nabla u}^p}{u^s} \, dx\right)^{\!\!\frac{\gamma}{p}} \left(\int_{B_R} u^{\frac{s\gamma}{p-\gamma}} \, dx\right)^{\!\!\frac{p-\gamma}{p}}\!.
		\end{equation}
		Moreover, we have that
		\begin{align*}
			\int_{B_R} \,\frac{\abs*{\nabla u}^p}{u^s} \, dx &= \int_{B_R} \,\frac{\abs*{\nabla u}^p}{u^s} \chi_{\{u \leq 1\}} + \frac{\abs*{\nabla u}^p}{u^s} \chi_{\{u>1\}} \, dx \\
			&= \int_{B_R} \,\frac{\abs*{\nabla u}^p}{u^{1-\varepsilon}} u^{1-\varepsilon-s} \chi_{\{u \leq 1\}} + \frac{\abs*{\nabla u}^p}{u^s} \chi_{\{u>1\}} \, dx \\
			&\leq C R^{(s-1+\varepsilon)\frac{n-p}{p-1}} \int_{B_R} \,\frac{\abs*{\nabla u}^p}{u^{1-\varepsilon}} \chi_{\{u \leq 1\}} + \frac{\abs*{\nabla u}^p}{u^s} \chi_{\{u>1\}} \, dx,
		\end{align*}
		where we used~\eqref{eq:bbu-serzou} for the last inequality, since~$s>1$. Hence,~\eqref{eq:gradu-su-u-2} yields
		\begin{equation}
		\label{eq:grad-comb-2}
			\int_{B_R} \,\frac{\abs*{\nabla u}^p}{u^s} \, dx \leq C R^{(s-1+\varepsilon)\frac{n-p}{p-1}+ \varepsilon}.
		\end{equation}
		We now choose~$s=1+\varepsilon$. Thanks to~\eqref{eq:p-gamma-grad}, there exists~$\varepsilon_3 \in (0,\varepsilon_0)$ such that
		\begin{equation*}
			p_\ast-1 \leq \frac{s\gamma}{p-\gamma} \leq \past-1,
		\end{equation*}
		for every~$\varepsilon \in (0,\varepsilon_3)$. As a consequence, from Proposition~\ref{prop:int-uv-2}, we infer that
		\begin{equation}
		\label{eq:grad-comb-3}
			\int_{B_R} u^{\frac{s\gamma}{p-\gamma}} \, dx \leq C R^{\varepsilon}.
		\end{equation}
		Combining~\eqref{eq:grad-comb-1}--\eqref{eq:grad-comb-3}, we conclude that
		\begin{equation}
		\label{eq:est-gradu-gamma-large}
			\int_{B_R} \,\abs*{\nabla u}^\gamma \, dx \leq C R^{\varepsilon} \quad\text{for every } \gamma \in \left[\frac{n}{n-1} (p-1),p-1+\frac{p}{n}\right)\!,
		\end{equation}
		possibly taking a smaller~$\varepsilon_3>0$ and up to a redefinition of~$\varepsilon>0$.
		
		With the help of~\eqref{eq:gradu-gradv}, we see that~\eqref{eq:est-gradu-gamma-large} immediately implies
		\begin{equation}
		\label{eq:est-gradv-gamma-large}
			\int_{B_{R}} v^{-\frac{n}{p} \gamma} \,\abs*{\nabla v}^{\gamma} \, dx \leq C R^{\varepsilon} \quad\text{for every } \gamma \in \left[\frac{n}{n-1} (p-1),p-1+\frac{p}{n}\right)\!.
		\end{equation}
			Therefore, for
		\begin{equation*}
			r \in \left[\frac{n}{n-1} (p-1),p-1+\frac{p}{n}\right) \quad\text{and}\quad r \leq q \leq \frac{n}{p} \, r,
		\end{equation*}
		using~\eqref{eq:babv-serzou} and~\eqref{eq:est-gradv-gamma-large}, we deduce that
		\begin{equation*}
			\begin{split}
				\int_{B_{R}} v^{-q} \,\abs*{\nabla v}^{r} \, dx &= \int_{B_{R}} v^{\frac{n}{p} r-q} v^{-\frac{n}{p} r} \,\abs*{\nabla v}^{r} \, dx \leq C R^{\frac{p}{p-1}\left(\frac{n}{p} r-q \right)} \int_{B_{R}} v^{-\frac{n}{p} r} \,\abs*{\nabla v}^{r} \, dx \\
				&\leq C R^{\frac{p}{p-1}\left(\frac{n}{p} r-q \right) + \varepsilon}.
			\end{split}
		\end{equation*}
		This proves the validity of~\eqref{eq:int-tbp-1'}.
		
		We now consider the case
		\begin{equation}
			\label{eq:rest-r-q-1}
			r \in  \left[\frac{n}{n-1} (p-1),p-1+\frac{p}{n}\right) \quad\text{and}\quad q_r \leq \frac{n}{p} \, r < q < p_\sharp+1,
		\end{equation}
		and define, for~$\varepsilon \in (0,1)$,
		\begin{equation}
		\label{eq:def-theta-1}
			\theta \coloneqq \left(p-1+\frac{p}{n}\right) \frac{1}{r} - \varepsilon.
		\end{equation}
		Clearly, from~\eqref{eq:rest-r-q-1}, we have~$\theta>1$ for all~$\varepsilon \in (0,\varepsilon_3)$, possibly taking a smaller~$\varepsilon_3>0$. Applying H\"older inequality, we deduce
		\begin{equation}
			\label{eq:int-grad-toest-1}
			\begin{split}
				\int_{B_{R}} v^{-q} \,\abs*{\nabla v}^{r} \, dx &= \int_{B_{R}} v^{-\frac{n}{p} r} \,\abs*{\nabla v}^{r} v^{\frac{n}{p} r-q} \, dx \\
				&\leq \left(\int_{B_{R}} v^{-\frac{n}{p} r\theta} \,\abs*{\nabla v}^{r\theta} \, dx\right)^{\!\!\frac{1}{\theta}} \left(\int_{B_{R}} v^{-\left(q-\frac{n}{p} r\right) \theta'} dx\right)^{\!\!\frac{1}{\theta'}}\!.
			\end{split}
		\end{equation}		
		Thanks to the definition of~$\theta$ in~\eqref{eq:def-theta-1}, estimate~\eqref{eq:est-gradv-gamma-large} immediately yields that
		\begin{equation}
			\label{eq:int-grad-toest-2}
			\int_{B_{R}} v^{-\frac{n}{p} r\theta} \,\abs*{\nabla v}^{r\theta} \, dx \leq C R^{\varepsilon}.
		\end{equation}
		To estimate the second integral on the right-hand side of~\eqref{eq:int-grad-toest-1}, we further distinguish between two cases based on the value of~$q$. First, we assume that
		\begin{equation*}
			\mathtt{q} \coloneqq p_\sharp +\frac{n}{np-n+p} \, r < q < p_\sharp+1.
		\end{equation*}
		Note that this condition is consistent thanks to the restriction on~$r$ in~\eqref{eq:ass-p-r-q}. For~$q>\mathtt{q}$, possibly taking a smaller~$\varepsilon_3>0$, depending on~$q$ as well, we can ensure that
		\begin{equation*}
			p_\sharp \leq \left(q-\frac{n}{p} \, r\right) \theta' \leq p_\sharp +1,
		\end{equation*}
		for every~$\varepsilon \in (0,\varepsilon_3)$. From this, Proposition~\ref{prop:int-uv-2} entails
		\begin{equation}
			\label{eq:int-grad-toest-3}
			\int_{B_{R}} v^{-\left(q-\frac{n}{p} r\right) \theta'} dx \leq C R^{\varepsilon},
		\end{equation}
		for all~$\varepsilon \in (0,\varepsilon_3)$, possibly for a smaller~$\varepsilon_3>0$. Therefore, combining~\eqref{eq:int-grad-toest-1}--\eqref{eq:int-grad-toest-3}, we infer the validity of~\eqref{eq:int-tbp-2'} for~$q>\mathtt{q}$.
		
		Finally, we suppose that
		\begin{equation*}
			\mathsf{q}_r < q \leq \mathtt{q}
		\end{equation*}
		and observe again that this condition is consistent thanks~\eqref{eq:ass-p-r-q}. Moreover, by virtue of~\eqref{eq:int-tbp-1'} and~\eqref{eq:int-tbp-2'} for~$q=\mathtt{q}+\varepsilon$, we have
		\begin{equation}
		\label{eq:int-qr-qr}
			\int_{B_{R}} v^{-\mathsf{q}_r} \,\abs*{\nabla v}^{r} \, dx \leq C R^{\varepsilon} \quad\text{and}\quad \int_{B_{R}} v^{-\mathtt{q}-\varepsilon} \,\abs*{\nabla v}^{r} \, dx \leq C R^{\varepsilon}.
		\end{equation}
		Let~$\lambda \in (0,1)$ be such that~$q = \lambda \mathsf{q}_r + (1-\lambda) (\mathtt{q}+\varepsilon)$. Interpolating and employing~\eqref{eq:int-qr-qr}, we get
		\begin{equation*}
			\int_{B_{R}} v^{-q} \,\abs*{\nabla v}^{r} \, dx \leq \left(\int_{B_{R}} v^{-\mathsf{q}_r} \,\abs*{\nabla v}^{r} \, dx\right)^{\!\!\lambda} \left(\int_{B_{R}} v^{-\mathtt{q}-\varepsilon} \,\abs*{\nabla v}^{r} \, dx\right)^{\!\! 1-\lambda} \leq  C R^{\varepsilon}.
		\end{equation*}
		This completes the proof of~\eqref{eq:int-tbp-2'}.
	\end{proof}
	
	\begin{remark}
	\label{rem:q-conf-sub}
		Note that, under the assumption~\eqref{eq:ass-p-r-q}, we have~$\mathsf{q}_r \geq q_r$ which indicate that the range in~\eqref{eq:int-tbp-2'} is not optimal, in view of Remark~\ref{rem:optim-q}. Furthermore, estimate~\eqref{eq:int-tbp-1'} is also suboptimal, suggesting that the bounds in Proposition~\ref{prop:grad-third} could potentially be sharpened.
	\end{remark}
	
	
	\section*{Acknowledgments} 
	\noindent The authors are members of the “Gruppo Nazionale per l'Analisi Matematica, la Probabilità e le loro Applicazioni” (GNAMPA) of the “Istituto Nazionale di Alta Matematica” (INdAM, Italy) and have been partially supported by the “INdAM - GNAMPA Project”, CUP \#E5324001950001\# and by the Research Project of the Italian Ministry of University and Research (MUR) PRIN 2022 “Partial differential equations and related geometric-functional inequalities”, grant number 20229M52AS\_004.
	
	The authors thank Xiaohan Cai for a question that revealed a small flaw in an earlier version of the manuscript.


\end{document}